\theoremstyle{plain}   
\newtheorem*{Proposition S}{Proposition S}
\newtheorem{theorem}{Theorem}[section]   
\newtheorem{corollary}[theorem]{Corollary}     
\newtheorem{lemma}[theorem]{Lemma}         
\newtheorem{proposition}[theorem]{Proposition}  
\theoremstyle{definition}
\newtheorem{definition}[theorem]{Definition}   
\theoremstyle{remark}
\newtheorem{remark}[theorem]{Remark}        
\numberwithin{equation}{section}
\newcommand{\ep}{\varepsilon}
\newcommand{\R}{{\mathbb R}}
\newcommand{\N}{{\mathbb N}}
\newcommand{\cal}{\mathcal}
\newcommand{\lin}{\operatorname{span}}
\newcommand{\vf}{\varphi}
\newcommand{\dist}{\operatorname{dist}}
\newcommand{\Li}{{\mathcal{L}}}
\newcommand{\Ki}{{\mathcal{K}}}
\newcommand{\F}{{\cal F}}
\begin{document}

\title[Fr\' echet differentiability via partial Fr\' echet differentiability]{Fr\' echet differentiability via partial Fr\' echet differentiability}

\author{Lud\v ek Zaj\'\i\v{c}ek}


\email{zajicek@karlin.mff.cuni.cz}

\address{Charles University,
Faculty of Mathematics and Physics,
Sokolovsk\'a 83,
186 75 Praha 8-Karl\'\i n,
Czech Republic}

\bigskip

\begin{abstract}
Let $X_1, \dots, X_n$ be Banach spaces and $f$ a real function on $X=X_1 \times\dots \times X_n$. Let
 $A_f$ be the set of all points $x \in X$ at which $f$ is partially Fr\' echet
 differentiable
but is not Fr\' echet
 differentiable. Our  results imply that if  $X_1, \dots, X_{n-1}$ are Asplund spaces and  $f$ is continuous
 (resp. Lipschitz) on $X$, then $A_f$ is a first category  set (resp. a $\sigma$-upper porous set).
We also prove that if $X$, $Y$ are separable Banach spaces and $f:X \to Y$ is a Lipschitz mapping, then the set
 of all points $x \in X$ at which  $f$ is  G\^ ateaux differentiable, is  Fr\' echet differentiable
 along a closed subspace of finite codimension but is not Fr\' echet
 differentiable, is $\sigma$-upper porous. A number of related more general results are also proved.
\end{abstract}


\markboth{L.~Zaj\'{\i}\v{c}ek}{Fr\' echet differentiability via partial Fr\' echet differentiability}

\maketitle

{\it 2000 Mathematics Subject Classification}:\  Primary: 46G05 ; Secondary: 46T20

\smallskip

 {\it Keywords}:\ \  Fr\' echet differentiability,  partial Fr\' echet differentiability, 
 Asplund space, first category set, $\sigma$-porous set

\section{Introduction}\label{Intr}
If $f$ is a real function on $\R^n$, denote by $A_f$ the set of all points $x \in \R^n$ at which $f$
 has all (finite) partial derivatives  $f_1'(x),\dots,f_n'(x)$ but it is not (Fr\' echet) differentiable. Of course, $A_f$
 can be nonempty and Stepanoff's \cite{St} examples show that $A_f$ can have positive measure
  for a continuous function on $\R^2$ (he constructed  even such a function which is everywhere partially differentiable and also a continuous function on $\R^2$ which is partially differentiable almost everywhere
	but it is nowhere differentiable).
	
	However, the situation is different, if we consider ``topological smallness'' instead of ``measure smallness'',
	 as the following result shows.
	\smallskip
	
	{\bf Theorem  C}\ \ Let $G \subset \R^n$ be an open set and $f: G \to \R$ a continuous function. Then
	$$A_f:= \{x \in G:\ f_1'(x) \in \R,\dots, f_n'(x) \in \R\ \text{and}\ f'(x)\ \text{does not exist}\}$$
	 is a first category (=meagre) set.
	\smallskip
	
	For $n=2$, this result follows from Gorlenko's 1977 article \cite{G}. For general $n$, it was proved
	 (even for $f:G \to Y$, where $Y$ is a separable Banach space) by Ka-Sing-Lau and Weil \cite{LW}
	 in 1978.
	
	A related remarkable result from \cite{Sa} gives that the conclusion of Theorem C holds if $f$ is {\it an arbitrary}
	 function which is partially differentiable everywhere in $G$.
	\smallskip
	
	Bessis and Clarke \cite{BC} in 1999 proved the following ``Lipschitz version'' of Theorem C.
	\smallskip
	
	{\bf Theorem  L}\ \ Let $G \subset \R^n$ be an open set and $f: G \to \R$ a Lipschitz function. Then
	$$A_f:= \{x \in G:\ f_1'(x) \in \R,\dots, f_n'(x) \in \R\ \text{and}\ f'(x)\ \text{does not exist}\}$$
	 is a $\sigma$-porous set.
	
	\smallskip
	
	In the present article,  $\sigma$-porosity is ``$\sigma$-upper porosity'' (see Definition \ref{dpor} below, cf. \cite{Za05}), i.e. it is considered in
	 ``Denjoy-Dolzhenko sense''. Note that if $A\subset \R^n$ is $\sigma$-porous, then it is both of the first category and Lebesgue null, but the opposite implication does not hold. So Theorem L does not follow from
	  Theorem C and the Rademacher theorem.
	
	In the present article we prove some generalizations of Theorem C and Theorem L in the infinite-dimensional
	 setting. Namely, let $X_1,\dots,X_n$ and $Y$ be Banach spaces, $G$ an open subset of $X:= X_1 \times \cdots \times X_n$ (equipped with the maximum norm), and $f:G \to Y$ a mapping. Denote by $A_f$ the set of all
	points $x \in G$ at which all partial Fr\' echet derivatives  of $f$ exist but the Fr\' echet derivative $f'(x)$
	 does not exist. 
	
	In Section  \ref{cont} we  prove (in Theorem \ref{sousp})
	that $A_f$ is a first category set whenever $f$ is continuous and
	\begin{equation}\label{jsousepa}
	\text{the spaces of continuous linear mappings}\ \Li(X_1,Y),\dots,\Li(X_{n-1},Y)\ \text{are separable}.
	\end{equation}
	
	We obtain this result as an immediate consequence of an easy known result from \cite{Pe} (Proposition
	 \ref{penot} below) and  Theorem \ref{dirfrstr} (which can be of an independent interest) which
	 says that, under some conditions, Fr\' echet differentiability along a subspace $V$ generically 
	 implies strict differentiability along $V$.
		
	In Section  \ref{lip} we prove by a different (more technical) method Theorem \ref{soulip} which implies that, under condition \eqref{jsousepa}, the generalization
	 of Theorem L also holds.
	
	In Section \ref{sepred}, using the method of separable reduction, we show in Theorem \ref{sousepred}
	that generalizations of Theorem C and
	 Theorem L hold also under a condition more general than  condition \eqref{jsousepa}. 
	 In particular, we prove that  generalizations of Theorem C and
	 Theorem L hold if $Y=\R$ and  $X_1,\dots, X_{n-1}$ are Asplund spaces  (Corollary \ref{aspl}).
	
	I do not believe that condition  \eqref{jsousepa} can be omitted in Theorem \ref{sousp} and/or
	 Theorem \ref{soulip}. Unfortunately, I was not able to find any counterexample. So it is still
	 possible  that the Banach space generalizations of Theorem C and/or Theorem L hold in the full
	 generality and from this reason I do not discuss all cases in which the validity of these generalizations
	 follow by the  methods of the present article (cf. Remark \ref{soured}).
	
	In the proof of Theorem \ref{sousepred},
  we use a result (Proposition \ref{borpart}) on the Borel type of the set of all points at which a partial derivative of a continuous (or a slightly more general) function
	exists. This result 
	 which generalizes a proposition on functions on $\R^2$ from \cite{MP} and can be of an independent interest 
	is proved in Section 
	\ref{borel}. 
	
	We consider (mainly in Section \ref{lip}) also related problems where instead of a product space  $X:= X_1 \times \cdots \times X_n$
	 we consider an arbitrary Banach space $X$ and instead of Fr\' echet partial derivatives  we consider
	 Fr\' echet derivatives along subspaces.  In this direction, we obtain Propositions \ref{dirsp}, Proposition \ref{dirlip}, and Corollary \ref{fregat} which is an immediate consequence of more general
	 Proposition \ref{kkodsx}, which can be of an independent interest. An another consequence of this proposition is   Corollary \ref{gat} which says
	 that if $X$, $Y$ are separable Banach spaces and $f:X \to Y$ is a Lipschitz mapping, then the set
 of all points $x \in X$ at which  $f$ is  G\^ ateaux differentiable, is  Fr\' echet differentiable
 along a closed subspace of  finite codimension but is not Fr\' echet
 differentiable, is $\sigma$-porous.

\section{Preliminaries}\label{Prel}

 In the following, we consider  real notrivial (i.e. not equal to  $\{0\}$) Banach spaces. In any fixed Banach space,  we denote the zero vector by
$0$ and the norm by $|\cdot|$.  By a {\it subspace} $Y$ of a Banach space $X$, we will
  mean a Banach subspace of $X$, i.e. a closed linear subspace $Y \neq \{0\}$. 
	We set $S_X:= \{x \in X: |x|=1\}$.
  By $\lin M$ we denote the linear span of $M \subset X$. 
   The equality $X= X_1\oplus\cdots\oplus X_n$ means that the Banach space 
	$X$ is the topological direct sum of its subspaces $X_1,\dots,X_n$. 
	The symbol $B(x,r)$ will denote the open ball
  with center $x$ and radius $r$. 
\begin{definition}\label{dpor}
Let $A$ be a subset of a Banach space $X$.
\begin{enumerate}
\item[(i)] We say that $A$ is porous at a point $x \in X$ if there exists $c>0$ such that for each $\delta>0$
 there exists $t \in B(x, \delta)\setminus \{x\}$ such that $B(t, c |t-x|) \cap A = \emptyset$.
\item[(ii)] 
$A$ is called a porous set if $A$ is porous at each point $x \in A$.
\item[(iii)] 
$A$ is called a $\sigma$-porous set if it is a countable union of porous sets.
\end{enumerate}
\end{definition}
	
	If $X$ and $Y$ are Banach spaces, the space of all continuous linear mappings
	 $\vf: X \to Y$ (equipped with the usual norm) will be denoted by $\cal L(X,Y)$.

	The word {\it ``generically''} has the usual sense; it means ``at all points except a first category set''.
  
  Recall that a Banach space $X$ is called an Asplund space if each continuous convex function on $X$ is generically Fr\' echet differentiable and that
  \begin{equation}\label{asp}
  \text{$X$ is Asplund if and only if $Y^*$ is separable for each separable subspace $Y \subset X$.}
  \end{equation} 
	
	Let $X$, $Y$ be Banach spaces, $G \subset X$ an open set, and $f:G \to Y$ a mapping. 
    We say  that {\it $f$ is Lipschitz at $x \in G$} if  $\limsup_{y \to x} \frac{|f(y)-f(x)|}{|y-x|} < \infty$.
   The directional and one-sided  directional derivatives 
    of $f$ at $x\in G$ in the direction $v\in X$ are defined respectively by
   $$f'(x,v) := \lim_{t \to 0} \frac{f(x+tv)-f(x)}{t}\ \ \text{and}\ \  f'_+(x,v) := \lim_{t \to 0+} \frac{f(x+tv)-f(x)}{t}.$$

\begin{definition}
Let  $X$ and $Y$ be Banach spaces,  $ V$ a closed subspace of $X$, $G \subset X$
 an open set, $a \in G$ and $f: G \to Y$ a mapping. We say that $f$ is {\it Fr\' echet differentiable  at  $a$
 along $V$}, if the mapping $g(v):= f(a+v),\ v \in V \cap (G-a)$, is
 Fr\' echet differentiable at $0 \in V$ and set $f'_V(a):= g'(0) \in  \cal L (V,Y)$.

We say that $f$ is {\it strictly differentiable at $a$ along $V$} if $f'_V(a)$ exists and
\begin{equation}\label{stral}
\lim_{(x, \tilde x) \to (a,a),\ 0 \neq \tilde x -x \in V}\ \frac{|f(\tilde x) - f(x) - f'_V(a) (\tilde x -x)|}
{|\tilde x -x|} = 0.
\end{equation}
\end{definition}
\begin{remark}\label{along}
\begin{enumerate}
\item[(i)] In the above definition, some authors write ``with respect to $V$''
 or ``in the direction of $V$'' instead of ``along $V$''.
\item[(ii)] The standard strict differentiability coincides with strict differentiability
 along $V:= X$. Note that some authors by ``strict differentiability'' means (a weaker) ``G\^ ateaux
 strict differentiability'' which is stronger than G\^ ateaux differentiability.
\item[(iii)] Condition \eqref{stral} can be clearly rewritten as
\begin{equation}\label{stral2}
\forall \ep>0\, \exists \delta>0\, \forall v  \in V:\ \ \{x, x+v\} \subset B(a, \delta)  \Rightarrow
 |f(x+v)- f(x) - f'_V(a) v | \leq \ep |v|.
\end{equation}
\end{enumerate}
\end{remark}
The notions of partial Fr\' echet differentiability and strict partial differentiability are factually special cases of notions of ``directional'' and ``strict directional'' differentiability along a subspace.
 If  $X_1,\dots, X_n$ are Banach spaces, we consider the Banach space  $X:= X_1\times\cdots \times X_n$
 (equipped with the maximum norm). In the following definition, we consider, if $1\leq i\leq n$ is given, $X_i$ as a subspace of $X$
 (identifying, as usual, $x_i \in X_i$ with $(0,\dots,0,x_i,0,\dots,0)\in X$).

\begin{definition}\label{part}
 Let  $X_1,\dots,X_n$ and $Y$ be Banach spaces, $X:= X_1\times \cdots \times X_n$, $G \subset X$ an open set,
 $a \in G$ and $f:G \to Y$ a mapping. Then, for $i=1,\dots,n$,
\begin{enumerate}
\item[(i)]
we set  $f'_i(a):= f'_{X_i}(a)$ and call it (if it exists) {\it partial Fr\' echet derivative of $f$ at $a$
 with respect to the $i$ th variable}
 and
\item[(ii)]
 we say that $f$ is {\it partially strictly differentiable at $a$   with respect to the $i$ th variable}
 if $f$ is strictly differentiable at $a$ along $X_i$.
\end{enumerate}
\end{definition}

\section{Borel type of the set of points where a partial derivative exists}\label{borel}

If $X$, $Y$ are Banach spaces, $G\subset X$ an open set and $f: G \to Y$ an arbitrary mapping, then
 (see \cite[Theorem 2]{Za91})
\begin{equation}\label{df}
\text{the set $D(f)$  of all $x \in G$
 at which $f$ is Fr\' echet differentiable  is an $F_{\sigma \delta}$ set.}
\end{equation}  
This result was proved in  \cite{Za91} using a characterization of Fr\' echet differentiability points  (the proof in \cite[Corollary 3.5.5]{LPT} is quite different). 
Applying this characterization to partial functions, we  immediately obtain a characterization (Lemma \ref{zobchar} below) of points at which  $f$ has a partial Fr\' echet derivative. We need the following notation.
\begin{definition}\label{dvetc}
Let $X_1$, $X_2$, $Y$ be Banach spaces, $G \subset X:= X_1 \times X_2$ an open set and $f: G \to Y$ a mapping. 
\begin{enumerate}
\item[(i)]
We denote by $C_1(f)$ the set of all points $x= (x_1,x_2)\in G$ at which $f$ is continuous 
 with respect to the first coordinate (i.e. $f(\cdot,x_2)$ is continuous at $x_1$).
\item[(ii)]
For $c>0$, $\ep>0$ and $\delta>0$, denote by
 $D_1(f,c,\ep,\delta)$  the set of all points $x=(x_1,x_2) \in G$ such that
\begin{equation}\label{rme}
\left| \frac{f(z+kv,x_2) - f(z,x_2)}{k} - \frac{f(z,x_2)-f(z-hv,x_2)}{h}\right| \leq \ep
\end{equation} 
whenever  $v \in X_1$, $|v|=1$, $h>0$, $k>0$, $z\in X_1$,  $z\in B(x_1, \delta)$, 
$z-hv \in B(x_1, \delta)$, $z+kv \in B(x_1, \delta)$ and $\min(h,k) > c |z-x_1|$.
\end{enumerate}
\end{definition}
Then, by the definitions, $x=(x_1,x_2) \in D_1(f,c,\ep,\delta)$ if and only if $x_1 \in D(F,c,\ep,\delta)$,
  where  $F:= f(\cdot,x_2)$ and $ D(F,c,\ep,\delta)$ is as in \cite[Definition 3]{Za91}.
 Since  $f'_1(x)$ exists if and only $F'(x_1)$ exists, 
 \cite[Theorem 1 and Note 2]{Za91} immediately imply the following result.
\begin{lemma}\label{zobchar}
  Let $X_1$, $X_2$, $Y$, $G$ and $f$  be as in Definition \ref{dvetc} and $x\in G$. Then the 
 following conditions are equivalent.
\begin{enumerate}
\item [(i)]
 $f'_1(x)$ exists.
\item [(ii)]
 $x \in C_1(f) \cap  \bigcap_{c>0}\bigcap_{\ep>0}\bigcup_{\delta>0} D_1(f,c,\ep,\delta). $
\item [(iii)]
$x \in  C_1(f) \cap  \bigcap_{n \in \N}  \bigcup_{p \in \N}  \, D_1(f,1/n, 1/n, 1/p).$
\end{enumerate}
\end{lemma}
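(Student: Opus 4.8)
The plan is to deduce the lemma directly from the one-variable characterization of Fr\'echet differentiability points given in \cite[Theorem 1 and Note 2]{Za91}, applied to the partial function $F := f(\cdot, x_2)$. So fix $x = (x_1, x_2) \in G$ and regard $F$ as a mapping on the open set $U := \{u \in X_1 : (u, x_2) \in G\} \subset X_1$, which contains $x_1$. By Definition \ref{part}(i), $f'_1(x)$ exists precisely when $F$ is Fr\'echet differentiable at $x_1$ (and then $f'_1(x) = F'(x_1)$).

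Next I would translate the two auxiliary notions occurring in (ii) and (iii) into statements about $F$. By Definition \ref{dvetc}(i), $x \in C_1(f)$ holds if and only if $F$ is continuous at $x_1$. Comparing inequality \eqref{rme} together with all of its quantifier restrictions against \cite[Definition 3]{Za91}, one checks that for all $c, \ep, \delta > 0$,
\[
x = (x_1, x_2) \in D_1(f, c, \ep, \delta) \iff x_1 \in D(F, c, \ep, \delta),
\]
where $D(F, c, \ep, \delta)$ is the set introduced in \cite{Za91}. (It is convenient here that $X$ carries the maximum norm, so that the balls $B(x_1, \delta)$ appearing in Definition \ref{dvetc}(ii) are the same whether read in $X_1$ or in the slice $X_1 \times \{x_2\} \subset X$; and since only small $\delta$ are relevant for the union over $\delta$, all the evaluation points automatically lie in $G$.)

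With these identifications in hand, the three conditions are merely a transcription of the cited results. By \cite[Theorem 1]{Za91}, $F$ is Fr\'echet differentiable at $x_1$ if and only if $F$ is continuous at $x_1$ and $x_1 \in \bigcap_{c>0}\bigcap_{\ep>0}\bigcup_{\delta>0} D(F, c, \ep, \delta)$; rewriting this via the translations above gives (i) $\Leftrightarrow$ (ii). By \cite[Note 2]{Za91}, in that last expression one may let $c$ and $\ep$ run only over $\{1/n : n \in \N\}$ and $\delta$ only over $\{1/p : p \in \N\}$, which gives (ii) $\Leftrightarrow$ (iii). There is no genuine obstacle here: all the substance sits in \cite{Za91}, and the only point demanding care is to verify that Definition \ref{dvetc} reproduces \cite[Definition 3]{Za91} verbatim for the partial function $F$ --- in particular that the geometric constraints ($|v| = 1$, $h, k > 0$, $z, z - hv, z + kv \in B(x_1, \delta)$, $\min(h,k) > c|z - x_1|$) match on the nose --- which is routine.
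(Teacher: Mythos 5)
Your proposal is correct and coincides with the paper's own (very brief) argument: the paper likewise observes that $x\in D_1(f,c,\ep,\delta)$ iff $x_1\in D(F,c,\ep,\delta)$ for $F:=f(\cdot,x_2)$ and that $f'_1(x)$ exists iff $F'(x_1)$ exists, and then invokes \cite[Theorem 1 and Note 2]{Za91}. Your added remarks about the maximum norm and the evaluation points lying in $G$ are sensible bookkeeping but do not change the argument.
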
  

Using Lemma \ref{zobchar}, we will prove the following proposition generalizing the corresponding result 
 on  real functions in $\R^2$
which was  proved in \cite{MP} by a quite different  elementary  method. This proposition will be used in the proofs
 of Proposition \ref{gdpf} and Theorem  \ref{sousepred}.

\begin{proposition}\label{borpart}
Let $X_1$, $X_2$, $Y$ be Banach spaces, $G\subset X:= X_1 \times X_2$ an open set and $f: G \to Y$ a mapping which is 
 continuous with respect to the second variable (i.e. all partial mappings $f(a_1,\cdot)$, $(a_1,a_2) \in G$,
 are continuous). Then the set
$ D_1(f):= \{x \in X:\ f_1'(x) \ \text{exists}\}$
 is an $F_{\sigma \delta\sigma}$ set. If $G=X$, then $D_1(f)$ is an $F_{\sigma \delta}$ set.
\end{proposition}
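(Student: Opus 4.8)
The plan is to combine Lemma \ref{zobchar} with the structural result \eqref{df} applied to the partial functions $f(\cdot,x_2)$, taking care of how the ``supplementary'' variable $x_2$ enters. By Lemma \ref{zobchar}(iii), writing $c_n = d_n = 1/n$ for brevity, we have
$$
D_1(f) = C_1(f) \cap \bigcap_{n\in\N} \bigcup_{p\in\N} D_1(f,1/n,1/n,1/p).
$$
So I would first record that $C_1(f) = G$ under our hypothesis — no: the hypothesis is continuity in the \emph{second} variable, so $C_1(f)$ need \emph{not} be all of $G$. But this is fine: $C_1(f)$ is itself an $F_{\sigma\delta}$ set by \eqref{df} applied fiberwise (continuity of $f(\cdot,x_2)$ at $x_1$ is a special, simpler instance of the same characterization machinery, or can be obtained directly), and $F_{\sigma\delta}$ is closed under countable intersections, so it suffices to show each set $D_1(f,1/n,1/n,1/p)$ is $F_\sigma$ (then $\bigcup_p$ is $F_\sigma$, $\bigcap_n$ is $F_{\sigma\delta}$, and the extra intersection with $C_1(f)$ keeps us inside $F_{\sigma\delta}$; so in fact the bottleneck is whether $D_1(f,c,\ep,\delta)$ is $F_\sigma$ or merely $F_{\sigma\delta}$). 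Re-examining \cite[Definition 3]{Za91}: the set $D(F,c,\ep,\delta)$ is defined by a closed-type inequality condition, so for a \emph{continuous} $F$ it is closed; here $f(\cdot,x_2)$ need not be continuous, so $D_1(f,c,\ep,\delta)$ need only be $F_{\sigma\delta}$ in general — and this is exactly the source of the extra ``$\sigma$'' in the conclusion.

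Concretely, here is the order of steps. (1) Fix $c,\ep,\delta$. The defining inequality \eqref{rme} for a point $x=(x_1,x_2)$ to lie in $D_1(f,c,\ep,\delta)$ quantifies over $v\in S_{X_1}$, $h,k>0$ and $z\in X_1$ with $z,z-hv,z+kv\in B(x_1,\delta)$ and $\min(h,k)>c|z-x_1|$. The point is that this is a condition \emph{only} on the single partial function $F=f(\cdot,x_2)$, i.e. $x\in D_1(f,c,\ep,\delta)\iff x_1\in D(F,c,\ep,\delta)$, as already noted after Definition \ref{dvetc}. (2) Invoke \cite[Theorem 1 and Note 2]{Za91}: for an \emph{arbitrary} mapping $F$, the set $D(F,c,\ep,\delta)$ is $F_{\sigma\delta}$ (this is the genuinely used input; for continuous $F$ it would be closed, and when $G=X$ one can arrange $z$ to range over a fixed ball so that the set becomes closed after a single countable union over a sequence $\delta_j\uparrow$, which is why the $G=X$ case gives $F_{\sigma\delta}$ rather than $F_{\sigma\delta\sigma}$). (3) Now I need to lift this from ``$F_{\sigma\delta}$ in $x_1$ for each fixed $x_2$'' to ``$F_{\sigma\delta}$ in $X=X_1\times X_2$''. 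This is where continuity of $f$ in the second variable is used: one shows that the inequality \eqref{rme}, as a function of the full pair $(x_1,x_2)$, behaves semicontinuously, so that the set $D_1(f,c,\ep,\delta)$ is $F_{\sigma\delta}$ in $X$ — the fiberwise $F_{\sigma\delta}$ structure is ``uniform enough'' because shrinking $\ep$ slightly absorbs the perturbation in $x_2$ coming from continuity. Then (4) assemble: $D_1(f) = C_1(f)\cap\bigcap_n\bigcup_p D_1(f,1/n,1/n,1/p)$, each $D_1(f,1/n,1/n,1/p)$ is $F_{\sigma\delta}$, hence $\bigcup_p$ is $F_{\sigma\delta\sigma}$, hence $\bigcap_n$ is $F_{\sigma\delta\sigma}$ (an $F_{\sigma\delta\sigma}$ set intersected countably is still $F_{\sigma\delta\sigma}$), and intersecting with the $F_{\sigma\delta}\subset F_{\sigma\delta\sigma}$ set $C_1(f)$ stays in $F_{\sigma\delta\sigma}$. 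When $G=X$, each $D_1(f,1/n,1/n,1/p)$ is already $F_{\sigma}$ — no, is $F_{\sigma\delta}$ but the union over $p$ collapses, as in \cite[Note 2]{Za91}, so one gets $F_{\sigma\delta}$.

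**Main obstacle.** The delicate point is Step (3): transferring the fiberwise Borel classification to the product space. The cleanest route is to re-run, in the product setting, the argument behind \cite[Theorem 1, Note 2]{Za91} rather than to quote it as a black box: replace the fixed partial function $F=f(\cdot,x_2)$ by the family, and check that each step (the reduction to a countable quantifier set over rational $h,k$ and a countable dense set of directions $v$, the use of continuity to pass from ``$\leq\ep$'' to ``$<\ep$'' and back) still goes through when the extra variable $x_2$ is present and $f$ is continuous in $x_2$ — continuity in $x_2$ is precisely what lets the rational/dense reductions remain valid jointly. I expect the bookkeeping here to be the bulk of the work, while the final assembly with Lemma \ref{zobchar} and the closure properties of the Borel classes $F_\sigma\subset F_{\sigma\delta}\subset F_{\sigma\delta\sigma}$ under countable intersections is routine.
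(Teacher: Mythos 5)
Your skeleton (Lemma \ref{zobchar}, plus separate treatment of $C_1(f)$ and of the sets $D_1(f,1/n,1/n,1/p)$, with continuity in the second variable doing the work of lifting the fibrewise structure to the product) matches the paper's, and you correctly flag the fibrewise-to-product passage as the crux. But two of your intermediate claims are wrong, and the second one breaks your final assembly. First, the sets $D_1(f,c,\ep,\delta)$ are not merely $F_{\sigma\delta}$: the inequality \eqref{rme} does not involve $x_1$ at all except through the \emph{strict} constraints $z, z-hv, z+kv\in B(x_1,\delta)$ and $\min(h,k)>c|z-x_1|$, so for fixed $x_2$ the complement is a union over configurations of sets open in $x_1$; combined with continuity of $f$ in the second variable (which lets one pass to the limit in \eqref{rme} along $x_2^i\to x_2$), the set $F_k\cap D_1(f,1/n,1/n,1/p)$ is \emph{closed} in $X$ for $p\ge k$, where $F_k:=\{x\in G:\dist(x,X\setminus G)\ge 1/k\}$. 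No continuity in the first variable and no shrinking of $\ep$ is needed. The extra $\sigma$ in the conclusion comes solely from the exhaustion $G=\bigcup_k F_k$, which is needed to guarantee that the test points $(z,x_2)$, $(z\pm hv,x_2)$ stay in $G$; when $G=X$ the exhaustion is unnecessary, which is why that case gives $F_{\sigma\delta}$ — not for the reason you give.

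Second, and fatally for your bookkeeping: you assert that a countable intersection of $F_{\sigma\delta\sigma}$ sets is again $F_{\sigma\delta\sigma}$. This is false — the class $F_{\sigma\delta\sigma}$ is closed under countable unions and finite intersections, but a countable intersection of $F_{\sigma\delta\sigma}$ sets a priori lands one level higher in the Borel hierarchy. So if the building blocks $D_1(f,1/n,1/n,1/p)$ were only $F_{\sigma\delta}$, as you allow, $\bigcap_n\bigcup_p$ would not be known to be $F_{\sigma\delta\sigma}$ and your argument would not yield the stated conclusion. The paper's order of operations avoids this: it first intersects everything with $F_k$, proves that the resulting pieces $F_k\cap C_{m,j}$ and $F_k\cap D_1(f,1/n,1/n,1/p)$ (for $j,p\ge k$) are closed, obtains an $F_{\sigma\delta}$ set for each fixed $k$, and only then takes the single outer union over $k$. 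To repair your proof you must establish closedness of the building blocks relative to $F_k$ — which is essentially what your Step (3) mechanism does once stated correctly — rather than settle for an $F_{\sigma\delta}$ bound.
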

\begin{proof}
For each $k \in \N$, set  $F_k:= \{x \in G:\ \dist(x, X \setminus G) \geq 1/k\}$ if $X \neq G$ and 
 $F_k:= X$  if $X=G$. Clearly each $F_k$ is a closed set and $G= \bigcup_{k \in \N} F_k$. 
 So by Lemma \ref{zobchar} we obtain
\begin{equation}\label{char1}
D_1(f)= \bigcup_{k \in \N} (F_k \cap D_1(f)) =   \bigcup_{k \in \N} \left(F_k \cap  C_1(f) \cap  
\bigcap_{n \in \N}  \bigcup_{p \in \N}  \, D_1(f,1/n, 1/n, 1/p)\right).
\end{equation}
 Consequently it is sufficient to prove that, for each $k \in \N$, both
\begin{equation}\label{fkcj}
F_k \cap C_1(f)\ \ \text{is an}\ \ F_{\sigma \delta}\ \ \text{set}
\end{equation}
and
\begin{equation}\label{fkdj}
F_k \cap \bigcap_{n \in \N}  \bigcup_{p \in \N}  \, D_1(f,1/n, 1/n, 1/p)\ \ \text{is an}\ \ F_{\sigma \delta}\ \ \text{set}.
\end{equation}
First we will prove \eqref{fkcj}. To this end, fix an arbitrary $k \in \N$ and denote, 
 for $m,j \in \N$,
$$C_{m,j}:= \{x=(x_1,x_2) \in G:\ |f(t,x_2)- f(\tau,x_2)| \leq 1/m\ \text{whenever}\ 
 |t-x_1|< 1/j,\ |\tau-x_1|< 1/j\},$$
 and observe that
$$  C_1(f) = \bigcap_{m=1}^{\infty} \bigcup_{j=k}^{\infty} C_{m,j},\ \  F_k \cap C_1(f)= 
\bigcap_{m=1}^{\infty} \bigcup_{j=k}^{\infty}(F_k \cap C_{m,j}).$$
So it is sufficient to prove that  $F_k \cap C_{m,j}$ is a  closed set whenever  $j\geq k$. To this end, fix arbitrary $m,j \in \N$ with $j\geq k$
 and suppose that  $(x_1^i,x_2^i) \in F_k \cap C_{m,j}$, $i\in \N$, and  $ (x_1^i,x_2^i) \to (x_1,x_2),\ i\to \infty$. Then $(x_1,x_2) \in F_k$ since $F_k$ is closed.
  To prove  $(x_1,x_2) \in C_{m,j}$, consider arbitrary $t, \tau \in X_1$ with $|t-x_1| < 1/j$, $|\tau-x_1|< 1/j$.
	 Then, for all sufficiently large $i$, we have $|t-x_1^i| < 1/j$, $|\tau-x_1^i|< 1/j$, and consequently
	 $|f(t,x_2^i) - f(\tau, x_2^i)| \leq 1/m$. 
	Since $(x_1,x_2) \in F_k$ and $j\geq k$, we obtain that $(t,x_2) \in G$ and   $(\tau,x_2) \in G$.
	Since $f$ is continuous with respect to the second
	 variable, $f(t,x_2^i) \to f(t, x_2)$, $f(\tau,x_2^i) \to f(\tau, x_2)$, and therefore
	  $|f(t,x_2) - f(\tau, x_2)| \leq 1/m$. So we obtain  that $(x_1,x_2) \in  C_{m,j}$ and we are done.
		
		To prove \eqref{fkdj}, fix an arbitrary $k \in \N$. Since clearly 
		$$D_1(f,1/n, 1/n, 1/p_1) \subset D_1(f,1/n, 1/n, 1/p_2)\ \ \text{ whenever}\ \  p_1 \leq p_2,$$
		we have
		$$ F_k \cap \bigcap_{n \in \N}  \bigcup_{p \in \N}  \, D_1(f,1/n, 1/n, 1/p)= 
		\bigcap_{n=1}^{\infty}  \bigcup_{p=k}^{\infty} (F_k \cap D_1(f,1/n, 1/n, 1/p)).$$
		So it is sufficient to prove that  $F_k \cap D_1(f,1/n, 1/n, 1/p)$ is a  closed set whenever  $p\geq k$. To this end, fix arbitrary $n,p \in \N$ with $p\geq k$ 
 and suppose that  $x^i=(x_1^i,x_2^i) \in  F_k \cap D_1(f,1/n, 1/n, 1/p) $, $i \in \N$, and $ x^i=(x_1^i,x_2^i) \to x=(x_1,x_2),\ i\to \infty$. Then $(x_1,x_2) \in F_k$ since $F_k$ is closed.
To prove that $x=(x_1,x_2) \in  D_1(f,1/n, 1/n, 1/p) $,
 consider  arbitrary  $v\in X_1$ with $|v|=1$, reals  $h>0$, $k>0$ and  $z\in X_1$ such that
 $z \in B(x_1,1/p)$, $z-hv \in B(x_1,1/p)$, $z+kv \in B(x_1,1/p)$ and $\min(h,k) >  (1/n) |z-x_1|$.
 Since $(x_1,x_2) \in F_k$ and $p\geq k$, we obtain that $(z,x_2) \in G$, $(z+kv,x_2)\in G$  and
  $(z-hv,x_2)\in G$.
 Our aim is to prove that
\begin{equation}\label{chceme}
 \left| \frac{f(z+kv,x_2) - f(z,x_2)}{k} - \frac{f(z,x_2)-f(z-hv,x_2)}{h}\right| \leq  \frac{1}{n}.
\end{equation}
 Since $x_1^i \to x_1$, for all sufficiently large $i$, we have
 $z \in B(x_1^i,1/p)$, $z-hv \in B(x_1^i,1/p)$, $z+kv \in B(x_1^i,1/p)$ and $\min(h,k) > (1/n) |z-x_1^i|$, which
 together with   $x^i \in  D_1(f,1/n, 1/n, 1/p)$ implies
 \begin{equation}\label{vime}
 \left| \frac{f(z+kv,x_2^i) - f(z,x_2^i)}{k} - \frac{f(z,x_2^i)-f(z-hv,x_2^i)}{h}\right| \leq  \frac{1}{n}.
\end{equation}
  Since $f$ is continuous with respect to the second
	 variable, we have $f(z,x_2^i) \to f(z,x_2)$, $f(z+kv,x_2^i) \to f(z+kv,x_2)$,
	$f(z-hv,x_2^i) \to  f(z-hv,x_2)$, and consequently
	\eqref{vime} implies \eqref{chceme}. So we obtain that  $x \in  D_1(f,1/n, 1/n, 1/p) $ and
	  we are done.
\end{proof}

\section{Case of continuous mappings}\label{cont}

\subsection{Strict directional differentiability via Fr\' echet directional differentiability}

A well-known theorem (see e.g. \cite[Theorem B, p. 476]{Za91}) asserts that Fr\' echet differentiability at a point $x$ of an {\it arbitrary}
 mapping  $f:X \to Y$, where $X$, $Y$ are arbitrary Banach spaces, generically implies strict differentiability
  of $f$ at $x$. The following result which is a partial generalization of this theorem will be applied in the proof
	 of Theorem \ref{sousp}.
\begin{theorem}\label{dirfrstr}
Let $X$, $Y$ be Banach spaces and let $V$ be a  subspace of $X$ such that the space 
  $\cal L (V,Y)$ is  separable. Let $G\subset X$ be an open set and $f: G \to Y$  a continuous mapping. Then the set $A$
	 of all $a \in G$ such that $f'_V(a)$ exists and $f$ is not strictly differentiable at $a$ along
	 $V$ is a first category set.
	\end{theorem}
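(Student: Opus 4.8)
The plan is to follow the pattern of the classical theorem \cite[Theorem~B, p.~476]{Za91}, the new ingredient being that the space of admissible ``candidate derivatives'' is now $\cal L(V,Y)$, whose separability is exactly what permits the countable reduction. By Remark~\ref{along}(iii), a point $a$ at which $f'_V(a)$ exists fails to be a point of strict differentiability along $V$ if and only if there is $k\in\N$ such that for every $\delta>0$ one can find $v\in V$ and $x$ with $\{x,x+v\}\subset B(a,\delta)$ and $|f(x+v)-f(x)-f'_V(a)v|>\frac1k|v|$. Writing $A_k$ for the set of such $a$ (with $k$ fixed) we get $A=\bigcup_{k\in\N}A_k$, so it suffices to show each $A_k$ is of the first category. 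Fix $k$. Since $\cal L(V,Y)$ is separable, choose $(\varphi_j)_{j\in\N}$ in $\cal L(V,Y)$ with $\cal L(V,Y)=\bigcup_j B(\varphi_j,\frac1{4k})$; then $A_k=\bigcup_j A_{k,j}$, where $A_{k,j}:=\{a\in A_k:\ |f'_V(a)-\varphi_j|<\frac1{4k}\}$, and it is enough to prove that each $A_{k,j}$ is of the first category.

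For $\varphi\in\cal L(V,Y)$, $c>0$ and $r>0$ introduce the one-point set
$$Q(\varphi,c,r):=\{a\in G:\ |f(a+v)-f(a)-\varphi v|\le c|v|\ \text{ whenever }\ v\in V,\ |v|<r,\ a+v\in G\}.$$
Because $f$ is continuous and $G$ is open, a routine limit argument shows that $Q(\varphi,c,r)$ is closed in $G$: if $a_i\to a$ in $G$ with $a_i\in Q(\varphi,c,r)$ and $v$ is an increment admissible at $a$, then $v$ is admissible at $a_i$ for all large $i$ and one lets $i\to\infty$ in the defining inequality. Moreover, if $a\in A_{k,j}$ then, applying the existence of $f'_V(a)$ with tolerance $\frac1{4k}$ and using $|f'_V(a)-\varphi_j|<\frac1{4k}$, one obtains $r>0$ with $a\in Q(\varphi_j,\frac1{2k},r)$; since $Q(\varphi_j,\frac1{2k},r)\subset Q(\varphi_j,\frac1{2k},\frac1p)$ whenever $\frac1p\le r$, this gives $A_{k,j}\subset\bigcup_{p\in\N}Q_{j,p}$ with $Q_{j,p}:=Q(\varphi_j,\frac1{2k},\frac1p)$.

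The crux is that $A_{k,j}\cap\interior Q_{j,p}=\emptyset$ for every $p$. Indeed, suppose $a\in A_{k,j}\cap\interior Q_{j,p}$ and pick $\sigma\in(0,\frac1p)$ with $B(a,\sigma)\subset Q_{j,p}$. For any $v\in V\setminus\{0\}$ and any $x$ with $\{x,x+v\}\subset B(a,\sigma/2)$ we then have $x\in Q_{j,p}$, $|v|<\frac1p$ and $x+v\in G$, hence $|f(x+v)-f(x)-\varphi_j v|\le\frac1{2k}|v|$; combining this with $|f'_V(a)-\varphi_j|<\frac1{4k}$ yields $|f(x+v)-f(x)-f'_V(a)v|<\frac3{4k}|v|<\frac1k|v|$ for all such $x,v$. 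With $\ep=\frac1k$ and $\delta=\sigma/2$ this contradicts $a\in A_k$. Consequently $A_{k,j}\cap Q_{j,p}$ is contained in the topological boundary of the closed set $Q_{j,p}$, which is nowhere dense, so $A_{k,j}=\bigcup_p(A_{k,j}\cap Q_{j,p})$ is of the first category; hence so are $A_k$ and $A$.

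The step I expect to need the most care is precisely this passage from the one-point sets $Q(\varphi,c,r)$, whose inequalities only control increments issued from the centre, to the genuinely two-point strictness condition: it is legitimate exactly on $\interior Q_{j,p}$, where every point near $a$ again lies in $Q_{j,p}$, and it dictates the bookkeeping of tolerances ($\frac1{4k}$ for the net and for the pointwise estimate, $\frac1{2k}$ in $Q_{j,p}$, and $\frac3{4k}<\frac1k$ after the triangle inequality). Continuity of $f$ is used only to get closedness of $Q(\varphi,c,r)$, and separability of $\cal L(V,Y)$ is used only to replace the a~priori uncountably many operators $f'_V(a)$ by the countable family $(\varphi_j)$.
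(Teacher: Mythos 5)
Your proof is correct and follows essentially the same route as the paper: the countable decomposition driven by the separability of $\cal L(V,Y)$ (a net $(\varphi_j)$, a failure scale $1/k$, a radius $1/p$), the use of continuity of $f$ to transfer the one-point estimate $|f(a+v)-f(a)-\varphi_j v|\le c|v|$ to limits of points where it holds, and the key observation that once this one-point estimate holds throughout a neighbourhood it upgrades to the two-point strictness estimate, contradicting membership in $A_k$. The only difference is cosmetic bookkeeping: you package the argument as ``$A_{k,j}\cap Q_{j,p}$ lies in the boundary of the closed set $Q_{j,p}$,'' whereas the paper assumes some $A_{n,p,\vf}$ is dense in a ball about one of its points and derives the same contradiction directly.
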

	\begin{proof}
	Choose a countable dense subset $\Phi$ of $\cal L(V, Y)$ and consider an arbitrary point $a\in A$.
	By  \eqref{stral2}  we can choose $n \in \N$ such that
	\begin{equation}\label{jhvn}
	\forall\, \delta>0\, \exists\,  x\in X\,  \exists v \in V:\ \ \{x, x+v\} \subset B(a,\delta),\ 
	 |f(x+v) -f(x) - f'_V(a) v| >  \frac{4}{n} |v|.
	\end{equation}
	Further choose $p\in \N$  such that
	\begin{equation}\label{thvn}
	|f(a+v)- f(a) - f'_V(a) v| \leq \frac{1}{n} |v|\ \ \ \text{whenever}\ \ \ v\in V,\ |v| \leq \frac{1}{p}
	\end{equation}
	 and choose $\vf \in \Phi$ such that
	\begin{equation}\label{dhvn}
	|f'_V(a)- \vf| \leq \frac{1}{n}.
	\end{equation}
	Denote, for each $n\in \N$, $p\in \N$ and $\vf \in \Phi$, by $A_{n,p,\vf}$ the set of all $a \in A$
	 for which the conditions \eqref{jhvn}, \eqref{thvn} and \eqref{dhvn} hold. Then 
	$A = \bigcup \{ A_{n,p,\vf}:\ n,p \in \N, \vf \in \Phi\}$ and so it is sufficient to show
	 that all sets $A_{n,p,\vf}$ are nowhere dense. 
	
	To this end, suppose to the contrary that, for some fixed $n$, $p$, $\vf$, the set $A_{n,p,\vf}$ is not nowhere dense.
	 Then there exist $z \in A_{n,p,\vf}$ and $\omega >0$ such that $A_{n,p,\vf}$ is dense in $B(z, \omega)$.
	  Now observe that, using \eqref{thvn} and \eqref{dhvn}, we easily obtain that
		\begin{equation}\label{trojn}
		|f(a+v) -f(a) - \vf(v)| \leq \frac{2}{n} |v|\ \ \text{whenever}\ \ \ a \in A_{n,p,\vf},\ v\in V,\ |v| \leq \frac{1}{p}.
		\end{equation}
		Applying \eqref{jhvn} to $a:= z$ with $\delta:= \min(\omega, 1/2p)$, we can choose
	$x \in B(z,\delta)$ and $v\in V$ such that $x+v \in  B(z,\delta)$ and
	\begin{equation}\label{dctn}
	|f(x+v) -f(x) - f'_V(z) v | > \frac{4}{n} |v|.
	\end{equation}
	Clearly $|v|< 1/p$. 
	By the choice of $z$, $\omega$ and $\delta$, there exist $a_k \in A_{n,p,\vf}$, $k \in \N$,
	 such that $a_k \to x$, and therefore $a_k +v \to x+v$. Since  $|v|< 1/p$, using \eqref{trojn} to $a:=a_k$, $k \in \N$,
	 and continuity of $f$, we easily obtain  $|f(x+v) -f(x) - \vf(v)| \leq \frac{2}{n} |v|$. This inequality
		 together with $|f'_V(z) - \vf|\leq 1/n$ implies 
		$|f(x+v) -f(x) - f'_V(z) v | \leq \frac{3}{n} |v|$ which contradicts \eqref{dctn}.
	\end{proof}
	
	\begin{remark}\label{ljesep}
	Theorem \ref{dirfrstr} can be applied e.g. in the following cases:
	\smallskip
	
	\begin{enumerate}
	\item[(i)] $V$ is finite-dimensional and $Y$ is separable.
	\item[(ii)]  $V$ is a separable Asplund space and $Y$ is finite-dimensional.
	\item[(iii)]    $V = C(K)$ for a countable compact set and $Y$ is separable with the Radon-Nikod\' ym property.
	\item[(iv)]  $V$ is a closed subspace of $c_0$ and $Y$ is separable with the Radon-Nikod\' ym property.
	\item[(v)]  $V= \ell_p$, $Y= \ell_q$, $1 \leq q<p< \infty$.  
	\end{enumerate}
	
	 Indeed, in all these cases, the space  $\Li (V,Y)$ is separable. The most natural cases (i) and (ii)
	 are almost obvious. For the cases (iii) and (iv) see \cite[pp. 114--115]{LPT}. In the well-known case (v)
	 Pitt's theorem (see, e.g., \cite[Proposition 6.25]{FHZ}) says that  $\Li(V,Y)$ coincides with the space
	$\Ki(V,Y)$ of compact operators which is separable (e.g. by well-known \cite[Fact 5.4, p. 20]{VZ}).
	
	For some other cases involving classical
	 Banach spaces see \cite[Example 5.5]{VZ}.
	\end{remark}
	
	\begin{remark}\label{notbel}
	 I do not believe that the assumption that $\Li (V,Y)$ is separable can be omitted in Theorem \ref{dirfrstr},
	 but I do not know any counterexample.
	\end{remark}

	\subsection{Fr\' echet differentiability of continuous functions via partial Fr\' echet differentiability} 
	 \ 
	
	The main result of the present section  (Theorem \ref{sousp} below) is an almost immediate consequence
	of Theorem \ref{dirfrstr} and the following known result    
 (see \cite[Proposition 2.57]{Pe}, where ``strict differentiability'' is called ``circa-differentiability''). 
\begin{proposition}\label{penot} (\cite{Pe})
Let  $X_1,\dots,X_n$ and $Y$ be Banach spaces, $X:= X_1\times \cdots \times X_n$, $G \subset X$ an open set,
 $a \in G$ and $f:G \to Y$ a mapping. Let  $f$  be  partially  Fr\' echet differentiable  at $a$
 with respect to the $n$ th variable and let $f$ be strictly differentiable at $a$ with respect to the $j$ th
 variable for each $1\leq j \leq n-1$. Then $f$ is Fr\' echet differentiable at $a$.
\end{proposition}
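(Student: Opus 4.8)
The plan is to write down the only possible candidate for $f'(a)$ and then control the increment of $f$ by a telescoping argument along the $n$ coordinate directions; the single point that really matters is the order in which the coordinates are peeled off.

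First I would note that strict differentiability of $f$ at $a$ along $X_j$ includes, by definition, the existence of $f'_j(a)=f'_{X_j}(a)\in\cal L(X_j,Y)$ for each $1\le j\le n-1$, while $f'_n(a)$ exists by hypothesis; so one can set $L(h):=\sum_{j=1}^{n} f'_j(a)h_j$ for $h=(h_1,\dots,h_n)\in X$, which is a continuous linear map ($\|L\|\le\sum_j\|f'_j(a)\|$), and it suffices to prove $f'(a)=L$. Given $\ep>0$, I would use partial Fr\'echet differentiability in the $n$th variable to pick $\delta_n>0$ with $|f(a+v)-f(a)-f'_n(a)v|\le\ep|v|$ for all $v\in X_n$, $|v|\le\delta_n$, and, for each $1\le j\le n-1$, use \eqref{stral2} with $V:=X_j$ to pick $\delta_j>0$ with $|f(x+v)-f(x)-f'_j(a)v|\le\ep|v|$ whenever $v\in X_j$ and $\{x,x+v\}\subset B(a,\delta_j)$; after shrinking so that $B(a,\delta_j)\subset G$ for all $j$, put $\delta:=\min_{1\le j\le n}\delta_j$.

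Then, for $h=(h_1,\dots,h_n)$ with $|h|<\delta$ (recall $X$ carries the maximum norm), I would introduce the broken-line points $c_0:=a$, $c_1:=a+h_n$ — the $n$th coordinate being moved \emph{first} — and $c_{1+j}:=a+h_n+h_1+\dots+h_j$ for $j=1,\dots,n-1$, so that $c_n=a+h$ and each $c_i$ lies within maximum-norm distance $|h|<\delta$ of $a$, hence in $G$. Telescoping along $c_0,c_1,\dots,c_n$ then yields
\[
f(a+h)-f(a)-L(h)=\bigl(f(c_1)-f(c_0)-f'_n(a)h_n\bigr)+\sum_{j=1}^{n-1}\bigl(f(c_{1+j})-f(c_j)-f'_j(a)h_j\bigr),
\]
where the first bracket, whose increment $c_1-c_0=h_n$ is based at $a$, has norm $\le\ep|h_n|\le\ep|h|$ by the choice of $\delta_n$, and the $j$th summand has norm $\le\ep|h_j|\le\ep|h|$ by the choice of $\delta_j$ applied with $x:=c_j$ and $v:=h_j\in X_j$ (both $c_j,c_{1+j}\in B(a,\delta)\subset B(a,\delta_j)$). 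Summing gives $|f(a+h)-f(a)-L(h)|\le n\ep|h|$ whenever $|h|<\delta$, and since $\ep$ was arbitrary this proves $f'(a)=L$.

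I do not anticipate a real obstacle — apart from the harmless bookkeeping $B(a,\delta_j)\subset G$, the whole estimate is routine $\ep$--$\delta$ work. The sole conceptual point is that the $n$th coordinate, for which only ordinary (pointwise) partial Fr\'echet differentiability \emph{at $a$} is available, must be the one displaced first, so that its increment is exactly $f(a+h_n)-f(a)$; for every other coordinate the increment $f(c_{1+j})-f(c_j)$ is based at a point $c_j\ne a$ carrying the already-introduced perturbations in the other coordinates, and this is precisely why one needs the stronger hypothesis of \emph{strict} differentiability along $X_j$ there — it controls increments between arbitrary pairs of points near $a$, not merely increments based at $a$ — and why it cannot be relaxed to plain Fr\'echet differentiability along $X_j$.
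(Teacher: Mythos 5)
Your proof is correct. The paper does not prove this proposition itself but cites it from Penot (\cite[Proposition 2.57]{Pe}); your telescoping argument --- displacing the $n$th coordinate first so that its increment is based at $a$ (where plain partial Fr\'echet differentiability suffices), and handling the remaining coordinates via \eqref{stral2}, which controls increments $f(x+v)-f(x)$ with $v\in X_j$ between \emph{arbitrary} pairs of points in $B(a,\delta)$ --- is precisely the standard proof, and your closing remark correctly identifies why strictness cannot be dropped for $j\le n-1$.
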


\begin{theorem}\label{sousp}
	Let  $X_1,\dots,X_n$ and $Y$ be Banach spaces, $X:= X_1\times \cdots \times X_n$, $G \subset X$ an open set,
  and let $f:G \to Y$ be a continuous mapping. Suppose that  the spaces
 $\cal L (X_1, Y),\dots, \cal L (X_{n-1}, Y)  $ are separable. Then there exists a first
 category set $A \subset G$ such that, for all $x \in G \setminus A$, the following implication
 holds:
$$\text{$f$ has all Fr\' echet partial derivatives at $x$\ \ $\Rightarrow$\ \ $f$ is  Fr\' echet differentiable at 
$x$.} $$
\end{theorem}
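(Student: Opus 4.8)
The plan is to combine Proposition \ref{penot} with Theorem \ref{dirfrstr} applied $n-1$ times, one subspace at a time. First I would set, for each $j \in \{1,\dots,n-1\}$, $V_j := X_j$ viewed as a subspace of $X = X_1 \times \cdots \times X_n$, and let $A_j$ be the set of all $a \in G$ at which $f$ is Fr\'echet differentiable along $V_j$ but is not strictly differentiable at $a$ along $V_j$. Since $\cal L(V_j, Y) = \cal L(X_j, Y)$ is separable by hypothesis and $f$ is continuous, Theorem \ref{dirfrstr} gives that each $A_j$ is a first category subset of $G$. Put $A := A_1 \cup \cdots \cup A_{n-1}$, which is again a first category set.

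Next I would verify that this $A$ works. Fix $x \in G \setminus A$ and suppose $f$ has all Fr\'echet partial derivatives at $x$; in particular $f'_j(x) = f'_{X_j}(x)$ exists for every $j$, so $f$ is Fr\'echet differentiable along $X_j$ at $x$ for each $j \in \{1,\dots,n\}$. For $j \in \{1,\dots,n-1\}$, since $x \notin A_j$, the existence of $f'_{X_j}(x)$ forces $f$ to be strictly differentiable at $x$ along $X_j$, i.e. partially strictly differentiable at $x$ with respect to the $j$-th variable in the sense of Definition \ref{part}(ii). Moreover $f$ is partially Fr\'echet differentiable at $x$ with respect to the $n$-th variable since $f'_n(x)$ exists. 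Thus all hypotheses of Proposition \ref{penot} are satisfied at the point $a := x$, and we conclude that $f$ is Fr\'echet differentiable at $x$. This establishes the desired implication for every $x \in G \setminus A$.

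This argument is essentially bookkeeping, and there is no real obstacle: the substantive content is already carried by Theorem \ref{dirfrstr} (whose proof is the technical heart) and by the cited Proposition \ref{penot}. The only minor point to be careful about is the identification of $X_j$ as a closed subspace of the product $X$ equipped with the maximum norm, so that ``partial Fr\'echet derivative with respect to the $j$-th variable'' literally means ``Fr\'echet derivative along $X_j$'' and likewise for the strict versions — but this identification is exactly the one fixed in Definition \ref{part}, so nothing needs to be reproved. One should also note that the $n$-th space $X_n$ plays a distinguished role (we only need partial \emph{Fr\'echet} differentiability there, not strict differentiability), which is why the separability assumption is imposed only on $\cal L(X_1,Y),\dots,\cal L(X_{n-1},Y)$ and not on $\cal L(X_n,Y)$.
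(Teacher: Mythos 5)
Your proposal is correct and follows essentially the same route as the paper: define $A_j$ as the set where $f'_{X_j}$ exists but $f$ is not strictly differentiable along $X_j$, apply Theorem \ref{dirfrstr} to each $X_j$, $1\leq j\leq n-1$, take the union, and finish with Proposition \ref{penot}. Nothing is missing.
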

\begin{proof}
Let $A_i$, $1\leq i \leq n-1$, be the set of all $x \in G$ such that $f'_{X_i}(x)$ exists and
	 $f$ is not strictly differentiable at $x$ along $X_i$ (recall that we identify $X_i$ with a subspace of $X$ by the usual way). By Theorem  \ref{dirfrstr}  each $A_i$ is a first category set
	 and consequently $A:= A_1\cup \dots \cup A_{n-1}$ is also a first category set. If  $x \in G \setminus A$
	 anf $f$ has all Fr\' echet partial derivatives at $x$, then $f$ is strictly differentiable at $x$ 
	 with respect to the $j$ th variable for each  $1\leq j \leq n-1$ and so $f$ is Fr\' echet  differentiable
	 at $x$ by Proposition \ref{penot}.
	\end{proof}
	
	\begin{remark}\label{pitt}
	 Probably the most interesting is the case of a real function $f$ (i.e. $Y=\R$). Then we assume
 that the dual space  $(X_i)^*$ is separable (i.e. $X_i$ is a separable Asplund space)	for each $1\leq i \leq n-1$. Using the method of separable reduction, we will show that the result holds also
 if   $X_i$, $1\leq i \leq n-1$, are general Asplund spaces  (see Corollary \ref{aspl} below).
A number of other concrete applications of Theorem \ref{sousp} can be easily obtained using the facts
 from Remark \ref{ljesep}.
	\end{remark}

	As an easy consequence of Theorem \ref{sousp} (and Proposition \ref{borpart}) we obtain the following
	 result on generic Fr\' echet differentiability of functions whose all partial functions are generically
	   Fr\' echet differentiable.
		
		\begin{proposition}\label{gdpf}
		Let  $X_1,\dots,X_n$ and $Y$ be Banach spaces, $X:= X_1\times \cdots \times X_n$, 
  and let $f:X \to Y$ be  continuous. Suppose that all $X_i$, $1\leq i \leq n$, are separable and the spaces
 $\cal L (X_1, Y),\dots, \cal L (X_{n-1}, Y)  $ are separable.
		Let each partial function
		$$    f(x_1,\dots, x_{i-1},\cdot, x_{i+1},\dots,x_n),\ \ 1 \leq i \leq n,$$
		be generically Fr\' echet differentiable on $X_i$. Then $f$ is generically Fr\' echet differentiable. 
		\end{proposition}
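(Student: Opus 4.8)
The plan is to derive the proposition from Theorem~\ref{sousp} by showing that $f$ possesses \emph{all} Fr\'echet partial derivatives at generic points of $X$; the passage from ``each partial function is generically differentiable'' to ``$f$ has all partial derivatives at generic points'' will be carried out by a Fubini-type theorem for Baire category.

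Fix $i\in\{1,\dots,n\}$, write $W_i:=\prod_{j\ne i}X_j$, and identify $X$ with the product $W_i\times X_i$. Put $D_i(f):=\{x\in X:\ f_i'(x)\ \text{exists}\}$. Since $f$ is continuous, it is in particular continuous with respect to the variable group $W_i$, so Proposition~\ref{borpart}, applied with first factor $X_i$, second factor $W_i$ and $G=X$, shows that $D_i(f)$ is an $F_{\sigma\delta}$ subset of $X$; in particular $D_i(f)$ is Borel and hence has the Baire property in $X$. On the other hand, by Definition~\ref{part} the section of $D_i(f)$ over a fixed $w=(x_j)_{j\ne i}\in W_i$ coincides with the set of Fr\'echet differentiability points of the partial map $x_i\mapsto f(x_1,\dots,x_{i-1},x_i,x_{i+1},\dots,x_n)$, which by hypothesis is generically Fr\'echet differentiable on $X_i$, so this section is residual in $X_i$. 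Because all $X_j$ are separable, both $W_i$ and $X_i$ are second countable, and the set $X\setminus D_i(f)\subset W_i\times X_i$ has the Baire property with each of its $W_i$-sections of the first category in $X_i$. By the Kuratowski--Ulam theorem, $X\setminus D_i(f)$ is of the first category in $X$, i.e. $D_i(f)$ is residual.

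Since this holds for every $i$, the set $D:=\bigcap_{i=1}^n D_i(f)$ is residual in $X$, and at each point of $D$ all Fr\'echet partial derivatives of $f$ exist. The hypotheses of Theorem~\ref{sousp} are satisfied --- $f$ is continuous and $\cal L(X_1,Y),\dots,\cal L(X_{n-1},Y)$ are separable --- so there is a first category set $A\subset X$ such that, for every $x\in X\setminus A$, existence of all Fr\'echet partial derivatives of $f$ at $x$ implies Fr\'echet differentiability of $f$ at $x$. Hence $f$ is Fr\'echet differentiable at every point of the residual set $D\setminus A$, and the proposition follows.

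The step that requires attention is the application of the Kuratowski--Ulam theorem: it needs the set $X\setminus D_i(f)$ to have the Baire property and the factor spaces $W_i$, $X_i$ to be second countable. The former is precisely what Proposition~\ref{borpart} supplies --- this is why that Borel-type result is invoked here --- and the latter is guaranteed by the standing separability assumption on the $X_j$; with both ingredients available, the category analogue of Fubini's theorem yields the desired sectionwise-to-global implication with no further work.
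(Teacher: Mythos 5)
Your proposal is correct and follows essentially the same route as the paper: apply Proposition~\ref{borpart} to see that the set where the $i$-th partial Fr\'echet derivative exists is Borel (hence has the Baire property), use the Kuratowski--Ulam theorem together with the sectionwise hypothesis to conclude that this set is residual, intersect over $i$, and finish with Theorem~\ref{sousp}. The only difference is that you spell out the second-countability hypothesis of Kuratowski--Ulam explicitly, which the paper leaves implicit.
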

	\begin{proof}
	For each  $1 \leq i \leq n$,  denote
	$$   P_i:= \{ x \in X:\ f'_i(x)\ \ \text{does not exist}\}.$$
	 Identifying by the natural way  $X$ with $Y_i \times X_i$, where 
	$Y_i:= X_1 \times \cdots \times X_{i-1} \times X_{i+1} \times \cdots \times X_n$,
	 observe that, by the assumptions, the set $\{t \in X_i: (y,t) \in P_i\}$
	 is a first category set in $X_i$ for each $y \in Y_i$. Since  $P_i$ is a Borel set
	 by Proposition \ref{borpart}, it has the Baire property and consequently the Kuratowski-Ulam theorem (see, e.g.,
	 \cite[Theorem 8.41]{Ke}) implies that 
	 $P_i$ is a first category set. Thus $f$ has all  Fr\' echet  partial derivatives outside the first category set
	 $P_1\cup\cdots\cup P_n$ and consequently $f$ is generically Fr\' echet differentiable
	 by Theorem \ref{sousp}. 
	 \end{proof}

	Similarly as Theorem \ref{sousp}, we obtain the following result.

\begin{proposition}\label{dirsp} 
Let  $X$, $Y$ be Banach spaces, $G \subset X$  an open set
  and $f:G \to Y$ a continuous mapping. Let $V_1$ be a subspace of $X$ such that the space
	  $\cal  L (V_1,Y)$ is separable. Then there exists a first category set $A \subset G$
	 such that, for each $x \in G \setminus A$, the following assertion holds:
	\smallskip
	
$(*)$\ \ \  If $f$ is Fr\' echet differentiable  at $x$ along $V_1$ and along some topological complement
	 $V_2^x$ of $V_1$, then $f$ is Fr\' echet differentiable  at $x$.
\end{proposition}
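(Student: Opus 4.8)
The plan is to mimic the proof of Theorem~\ref{sousp}, replacing the fixed subspaces $X_i$ by the single subspace $V_1$ and handling the varying complement $V_2^x$ via an analogue of Proposition~\ref{penot}. First I would apply Theorem~\ref{dirfrstr} with $V:=V_1$: since $\Li(V_1,Y)$ is separable, the set $A$ of all $a\in G$ at which $f'_{V_1}(a)$ exists but $f$ is not strictly differentiable at $a$ along $V_1$ is a first category set. This $A$ is the exceptional set in the statement. So it remains to show that if $x\in G\setminus A$ and $f$ is Fr\'echet differentiable at $x$ along $V_1$ and along some topological complement $V_2^x$, then $f$ is Fr\'echet differentiable at $x$.

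For the remaining implication I would prove a pointwise lemma, in the spirit of Proposition~\ref{penot}: if $X=V_1\oplus V_2$, $f$ is \emph{strictly} differentiable at $a$ along $V_1$ and Fr\'echet differentiable at $a$ along $V_2$, then $f$ is Fr\'echet differentiable at $a$. The proof is the standard telescoping argument. Write an increment $x=v_1+v_2$ with $v_i\in V_i$ small (here the maximum norm on $V_1\times V_2$ is equivalent to the norm of $X$ since the decomposition is topological, which is where the constant depending on $V_2^x$ enters but does no harm). Decompose
\begin{equation*}
f(a+v_1+v_2)-f(a)-f'_{V_1}(a)v_1-f'_{V_2}(a)v_2
= \big[f(a+v_1+v_2)-f(a+v_2)-f'_{V_1}(a)v_1\big] + \big[f(a+v_2)-f(a)-f'_{V_2}(a)v_2\big].
\end{equation*}
The second bracket is $o(|v_2|)$ by Fr\'echet differentiability along $V_2$. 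For the first bracket, apply the strict differentiability condition \eqref{stral2} at the pair of points $a+v_2$ and $a+v_2+v_1$, both of which lie in $B(a,\delta)$ once $|v_1|,|v_2|$ are small: it gives $|f(a+v_1+v_2)-f(a+v_2)-f'_{V_1}(a)v_1|\le\ep|v_1|$. Hence the whole expression is $O(\ep)(|v_1|+|v_2|)=O(\ep)|x|$, and letting $\ep\to0$ establishes Fr\'echet differentiability at $a$ with derivative the obvious sum. (One should also note $f'_{V_2}(a)=f'_{V_2^x}(a)$ exists by hypothesis.)

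Combining the two parts finishes the proof: for $x\in G\setminus A$, if $f$ is Fr\'echet differentiable at $x$ along $V_1$, then since $x\notin A$ it is in fact strictly differentiable at $x$ along $V_1$; together with Fr\'echet differentiability along the complement $V_2^x$, the pointwise lemma gives Fr\'echet differentiability of $f$ at $x$. The main obstacle, if any, is purely bookkeeping: making sure the topological (not orthogonal) nature of the complement $V_2^x$ causes no trouble, i.e. that a point $a+v_1+v_2$ with $|v_1|,|v_2|$ controlled by a fixed multiple of $|x|$ still lies in any prescribed ball $B(a,\delta)$ for $|x|$ small, and that $o(|v_1|)+o(|v_2|)=o(|x|)$; the equivalence of norms under a topological direct sum handles this, with constants that may depend on $x$ but are irrelevant since the limit is taken with $x$ fixed. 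I expect this to be short and routine given Theorem~\ref{dirfrstr} and the Proposition~\ref{penot}-type argument are already in hand.
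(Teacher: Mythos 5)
Your proposal is correct and follows essentially the same route as the paper: the exceptional set is exactly the set given by Theorem~\ref{dirfrstr} applied to $V=V_1$, and the remaining pointwise implication is the $n=2$ case of Proposition~\ref{penot} (the paper obtains it by identifying $X$ with $V_1\times V_2^x$ and citing that proposition, whereas you re-derive it by the telescoping argument). The bookkeeping about the topological complement that you flag is handled exactly as you describe, so there is nothing to add.
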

\begin{proof} 
 Let $A$ be the set of all $x \in G$ such that $f'_{V_1}(x)$ exists and
	 $f$ is not strictly differentiable at $x$ along $V_1$. By Theorem  \ref{dirfrstr},  $A$ is a first category set. Now fix an arbitrary $x \in G \setminus A$ and suppose that $f$ is Fr\' echet differentiable  at $x$ along $V_1$ and along some topological complement
	 $V_2^x$ of $V_1$. Further identify $X$ with  $\tilde X: = V_1 \times V_2^x$ 
	 by the canonical isomorphism. Then $f$ (considered on $\tilde X$)
	 is strictly differentiable at $x$ along  $V_1$ (considered as a subspace of $\tilde X$) and it is
	 Fr\' echet differentiable along $V_2^x$. So Proposition \ref{penot} implies that $f$ is Fr\' echet differentiable
	 at $x$.
	\end{proof}

\section{Lipschitz case}\label{lip}

 \begin{lemma}\label{obec}
Let $X$, $Y$ be Banach spaces, $G \subset X$ an open set and $f:G \to Y$ an arbitrary mapping. Let $V_1$ 
 be a  subspace of $X$ such that $\cal L (V_1,Y)$ is  separable.
  Then there exists a $\sigma$-porous set $A \subset G$ such that
 the following assertion holds for each $x \in G \setminus A$:

$(*)$\ \ 
Let $f$ be Lipschitz at $x$ and  Fr\' echet differentiable at $x$ along  $V_1$, and  let $V^x$ and  $V_2^x$
be subspaces
 of $X$ such that
 $V^x = V_1 \oplus V_2^x$ and $f$ is Fr\' echet differentiable at $x$ along $V_2^x$. Then $f$ is Fr\' echet differentiable at $x$ along $V^x$.
\end{lemma}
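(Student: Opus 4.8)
The plan is to mimic the structure of the proof of Theorem \ref{dirfrstr}, but replacing ``first category'' by ``$\sigma$-porous'' and exploiting the Lipschitz hypothesis to produce the genuine porosity (i.e. the definite gap at a definite scale) at each bad point. The key point is that condition $(*)$ should follow from a \emph{strict} differentiability statement: if $f$ is strictly differentiable at $x$ along $V_1$ (not merely Fr\'echet differentiable along $V_1$) and Fr\'echet differentiable at $x$ along $V_2^x$, then $f$ is Fr\'echet differentiable at $x$ along $V^x = V_1 \oplus V_2^x$. This is exactly (the proof technique of) Proposition \ref{penot} applied inside the Banach space $V^x = V_1 \times V_2^x$, using that the maximum norm on a topological direct sum is equivalent to the ambient norm with constants one may absorb. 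So it suffices to show: the set $A$ of all $x \in G$ at which $f$ is Lipschitz, $f'_{V_1}(x)$ exists, but $f$ is \emph{not} strictly differentiable at $x$ along $V_1$, is $\sigma$-porous.

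First I would fix a countable dense set $\Phi \subset \cal L(V_1,Y)$, and decompose $A$ according to: (a) a Lipschitz constant bound $L \in \N$ valid on a ball $B(x,1/q)$; (b) an integer $n$ witnessing the failure of strict differentiability along $V_1$ in the sense of \eqref{stral2}, i.e. for every $\delta>0$ there are $v \in V_1$ and $x', x'+v \in B(x,\delta)$ with $|f(x'+v)-f(x')-f'_{V_1}(x)v| > (4/n)|v|$; (c) an integer $p$ witnessing the Fr\'echet differentiability along $V_1$ at $x$ itself, i.e. $|f(x+v)-f(x)-f'_{V_1}(x)v|\le (1/n)|v|$ for $v\in V_1$, $|v|\le 1/p$; (d) an approximant $\vf \in \Phi$ with $|f'_{V_1}(x)-\vf|\le 1/n$. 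Call the resulting piece $A_{L,q,n,p,\vf}$; then $A$ is a countable union of these, so it suffices to show each is porous.

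The heart of the argument is the porosity estimate. Fix such a piece $E := A_{L,q,n,p,\vf}$ and a point $z \in E$. As in Theorem \ref{dirfrstr}, conditions (c),(d) give the ``transferred'' inequality $|f(a+v)-f(a)-\vf(v)| \le (2/n)|v|$ for all $a \in E$ and all $v\in V_1$ with $|v|\le 1/p$; here the continuity argument of that proof must be replaced by a quantitative one using the uniform Lipschitz bound $L$ on a fixed ball, which is where hypothesis (a) and (b) enter, and this forces working at a fixed small scale $\delta_0 = \delta_0(L,q,n,p)$ rather than passing to a limit. Now, given $z \in E$ and a scale $\delta \in (0,\delta_0)$, apply (b) to get $v \in V_1$ and $x', x'+v \in B(z,\delta)$ with $|f(x'+v)-f(x')-\vf(v)| > (4/n)|v| - |f'_{V_1}(z)-\vf|\,|v| \ge (3/n)|v|$ (using (d)), while every point $a$ of $E$ within distance $\sim |v|$ of $x'$ would satisfy $|f(a+v)-f(a)-\vf(v)| \le (2/n)|v|$; combining these with the $L$-Lipschitz bound to estimate $|f(a+v)-f(a)-(f(x'+v)-f(x'))| \le 2L|a-x'|$ shows that $B(x', c\delta) \cap E = \emptyset$ for a suitable constant $c$ depending only on $L$ and $n$ (and $|v| \le 2\delta$, so $x' \in B(z,\delta)\setminus\{z\}$ up to a harmless adjustment). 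That is exactly the porosity of $E$ at $z$ with constant $c$.

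The main obstacle will be the bookkeeping in the porosity estimate: one must choose the decomposition parameters and the radius $c$ so that the empty-ball is centered at a point genuinely in $B(z,\delta)\setminus\{z\}$ and has radius a fixed fraction of $|z - x'|$ (or of $\delta$), uniformly over the piece $E$; this requires the uniform Lipschitz bound (a), which is why one cannot simply copy the ``continuity + limit'' step of Theorem \ref{dirfrstr} and must instead argue at a fixed scale $\delta_0$. Everything else — the reduction to strict differentiability along $V_1$, and the decomposition of $A$ into countably many pieces — is routine.
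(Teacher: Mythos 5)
Your reduction of $(*)$ to strict differentiability along $V_1$ (via Proposition \ref{penot} applied inside $V^x\cong V_1\times V_2^x$) is logically valid, but the intermediate statement you then set out to prove --- that the set of points at which $f$ is Lipschitz, Fr\'echet differentiable along $V_1$, but \emph{not strictly} differentiable along $V_1$, is $\sigma$-porous --- is false. Take $X=\R^2$, $V_1=\R\times\{0\}$, $Y=\R$, and $f(x_1,x_2)=\int_0^{x_1}\mathbf{1}_E$, where $E\subset\R$ is measurable with $0<\lambda(E\cap I)<\lambda(I)$ for every interval $I$. Then $f$ is $1$-Lipschitz and $f'_{V_1}$ exists on $E_0\times\R$ with $E_0$ co-null; but by the Lebesgue density theorem every point of $\R$ has arbitrarily close small intervals on which the average of $\mathbf{1}_E$ is $\geq 3/4$ and others on which it is $\leq 1/4$, so $f$ is strictly differentiable along $V_1$ at \emph{no} point. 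The exceptional set is therefore co-null in $\R^2$, hence not $\sigma$-porous (though it is first category, consistently with Theorem \ref{dirfrstr}). Note this does not contradict Lemma \ref{obec}: here $f$ is everywhere Fr\'echet differentiable along $V^x=\R^2$ wherever both partials exist, so the lemma's bad set is empty. Your route proves something strictly stronger than the lemma, and that stronger statement fails.

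The failure also shows up concretely in your porosity estimate, in two places. First, the witnesses to non-strict differentiability are pairs $x',x'+v\in B(z,\delta)$ with no lower bound on $|v|$ in terms of $|x'-z|$; the ``hole'' you can hope to punch at $x'$ has radius on the order of $|v|/(Ln)$, which need not be a fixed fraction of $|t-z|=|x'-z|$ as Definition \ref{dpor} requires --- this scale mismatch is exactly what the counterexample exploits. Second, even at matched scales, a single point $a\in E$ near $x'$ does not suffice: pointwise Lipschitzness at $a$ bounds $|f(a)-f(x')|$ by $L|a-x'|$, but $|f(a+v)-f(x'+v)|$ only by roughly $2L|v|$, which is far too large. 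The paper's proof avoids both problems by working directly with the failure of differentiability along $V^x$ \emph{anchored at the point $x$ itself}: the increment $v\in V^x$ satisfies $\max(|v_1^x|,|v_2^x|)\le p|v|$, the two auxiliary points of the piece $A^*$ are taken near $x+v_2^x$ and near $x+v$ (so the non-porosity hypothesis is invoked at $t$ with $|t-x|$ comparable to the relevant increment), and the Lipschitz condition is used at those two points of $A^*$, not at $x$ alone. You would need to abandon the strict-differentiability reduction and argue directly on the set where differentiability along $V^x$ fails.
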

\begin{proof}
First choose a countable dense subset $\Phi$ of $\cal L(V_1, Y)$.
Further denote by $A$ the  set of all $x \in G$, for which assertion $(*)$ does not hold. 

 Then, for each $x \in A$, $f$ is Lipschitz at $x$, is Fr\' echet differentiable at $x$ along  $V_1$, 
  and we can fix spaces $V^x$ and  $V_2^x$  such that
 $V^x = V_1 \oplus V_2^x$, $f$ is Fr\' echet differentiable at $x$ along $V_2^x$ and $f$ is not Fr\' echet differentiable at $x$ along $V^x$.

Now consider an arbitrary fixed $x \in A$.
 We can write any $v \in V^x$  in a unique way as  $v= v_1^x + v_2^x$ with $v_1^x \in V_1$ and $v_2^x \in V_2^x$
 and choose $p\in \N$ such that
\begin{equation}\label{nopr}
  \max(|v_1^x|, |v_2^x|)  \leq p |v|\ \ \ \text{for each}\ \ \ v \in V^x.
	\end{equation}
	Further, since $f$ is Lipschitz at $x$,  we can choose  $l \in \N$ so big that
\begin{equation}\label{elip}
 |f(y)-f(x)| \leq  l |y-x|\ \ \text{whenever}\ \ |y-x| \leq 1/l.
\end{equation}
Define  $\Psi^x: V^x \to Y$ setting  $\Psi^x(v) = f'_{V_1}(x)v_1^x + f'_{V_2^x}(x)v_2^x,\ \ v \in V^x$.
Then $ \Psi^x \in \cal L (V^x,Y)$. Since $f'_{V^x}(x)$ does not exist, we can choose $n\in \N$ such that
\begin{equation}\label{limsup}
 \limsup_{v \to 0, v \in V^x}  \ \frac{|f(x+v) - f(x) - (f'_{V_1}(x)v_1^x + f'_{V_2^x}(x)v_2^x)|} {|v|} > \frac{7}{n}.
\end{equation}
Further choose $m \in \N$ such that
\begin{equation}\label{dervj}
|f(x+h_1)-f(x) - f'_{V_1}(x) h_1| \leq  \frac{1}{pn} |h_1|\ \ \text{whenever}\ \ h_1\in V_1, |h_1| < \frac{1}{m}
\end{equation}
and
\begin{equation}\label{dervd}
|f(x+h_2)-f(x) - f'_{V_2^x}(x) h_2| \leq  \frac{1}{pn} |h_2|\ \ \text{whenever}\ \ h_2\in V_2^x, |h_2| < \frac{1}{m}.
\end{equation}
Finally choose  $\vf \in \Phi$ such that
\begin{equation}\label{vf}
 |f'_{V_1}(x)- \vf| \leq \frac{1}{pn}.
\end{equation}
For $p,l,n,m \in \N$ and $\vf\in \Phi$, denote by $A_{p,l,n,m,\vf}$ the set of all $x\in A$ for which
conditions  \eqref{nopr}, \eqref{elip}, \eqref{limsup}, \eqref{dervj}, \eqref{dervd}, \eqref{vf} hold.
Then  $A = \bigcup \{ A_{p,l,n,m,\vf}:\ p,l,n,m \in \N,\ \vf \in \Phi\}$ and thus it is sufficient
to prove that, for each fixed  $p,l,n,m \in \N$ and $\vf\in \Phi$, the set  $A^*:= A_{p,l,n,m,\vf}$
is porous. 

Suppose to the contrary that  $x \in A^*$ such that $A^*$ is not porous at $x$ is given. Then, by Definition \ref{dpor}, we can choose  
 $0< \delta < 1$ such that
\begin{equation}\label{por}
B\left(t,\frac{|t-x|}{2pln}\right) \cap A^* \neq \emptyset\ \ \text{whenever}\ \ 
t \in B(x,\delta)\setminus \{x\}.
\end{equation}
By \eqref{limsup} we can choose $ v\in V^x$ such that 
\begin{equation}\label{vmal}
0<|v| < \frac{\delta}{pm}<1
\end{equation}
and
\begin{equation}\label{zlimsup}
D:= |f(x+v) - f(x) - (f'_{V_1}(x)v_1^x + f'_{V_2^x}(x)v_2^x)|  > \frac{7}{n} |v|.
\end{equation}
 By \eqref{nopr} and \eqref{vmal} we obtain 
\begin{equation}\label{nove}
 \max(|v_1^x|, |v_2^x|)\leq p |v| < p\cdot \frac{\delta}{pm} = \frac{\delta}{m} < \frac{1}{m}
\end{equation}
 and so \eqref{dervd} and \eqref{nopr} imply
\begin{equation}\label{drdif}
|f(x+v_2^x)-f(x) - f'_{V_2}(x)v_2^x | \leq  \frac{1}{pn} |v_2^x | \leq  \frac{1}{n} |v |.
\end{equation}
By  \eqref{nove} we have $|v_2^x| < \delta$. Further we have $v_2^x\neq 0$, since otherwise $v= v_1^x$
 and so \eqref{dervj} with \eqref{nove} imply $D \leq (1/n) |v|$ which contradicts  \eqref{zlimsup}.
Thus we can apply \eqref{por} to $t:= x + v_2^x$ and obtain a point $y \in A^*$ such that
\begin{equation}\label{vzdy}
| (x + v_2^x)-y|  < \frac{|v_2^x|}{2pln} \leq \frac{p |v|}{2pln} = \frac{|v|}{2ln} \leq \frac{1}{l}
\end{equation}
 (we have used also \eqref{nopr} and \eqref{vmal}). Similarly, since $0< |v| < \delta$ by \eqref{vmal},
 we obtain  by \eqref{por}  a point $z \in A^*$ such that
\begin{equation}\label{vzdz}
| (x + v)-z| < \frac{|v|}{2pln}  \leq \frac{1}{l}.
\end{equation}
Since $y \in A^*$ and $z \in A^*$, by \eqref{vzdy}, \eqref{vzdz} and \eqref{elip} we obtain
\begin{equation}\label{dify}
|f(x + v_2^x)- f(y)| \leq l\cdot | (x + v_2^x)-y|\leq l\cdot \frac{ |v|}{2ln} \leq \frac{1}{n} |v |
\end{equation}
and
\begin{equation}\label{difz}
|f(x + v)- f(z)| \leq l\cdot | (x + v)-z|\leq l\cdot \frac{|v|}{2pln} \leq \frac{1}{n} |v |.
\end{equation}
Since $y \in A^*$  and  $|v_1^x| < 1/m$  by \eqref{nove}, using \eqref{dervj}
 and \eqref{nopr} we obtain
\begin{equation}\label{dervy}
 |f(y + v_1^x) - f(y) - f'_{V_1}(y) v_1^x| \leq \frac{1}{pn} |v_1^x|  \leq \frac{1}{n} |v |.
\end{equation}
We have  $|(x+v) -( y + v_1^x)|  =   | (x + v_2^x)-y| \leq (2ln)^{-1} |v| $ by \eqref{vzdy} and consequently
 \begin{equation}\label{zposy}
     |z- ( y + v_1^x)| \leq  | (x + v)-z|  +    |(x+v) -( y + v_1^x)| \leq \frac{|v|}{2pln} + \frac{|v|}{2ln} 
 \leq  \frac{|v|}{ln}  \leq \frac{1}{l}
\end{equation}
 by \eqref{vzdz}.
Since $z \in A^*$, we obtain by \eqref{zposy} and \eqref{elip}
$$
|f(z) - f ( y + v_1^x)| \leq l |z- ( y + v_1^x)|\leq l\cdot  \frac{|v|}{ln} = \frac{1}{n} |v |,
$$
which together with \eqref{difz} gives
\begin{equation}\label{fvposy}
|f(x+v) - f ( y + v_1^x)| \leq |f(x + v)- f(z)|  + |f(z) - f ( y + v_1^x)| \leq \frac{2}{n} |v |.
\end{equation}
Since $y \in A^*$, we obtain by \eqref{vf} (with $x:=y$) $|f'_{V_1}(y) - \vf| \leq    (pn)^{-1}$, which
 together with \eqref{vf} gives
$|f'_{V_1}(y) - f'_{V_1}(x)| \leq  2 (pn)^{-1}$. Using also \eqref{nopr}, we obtain
\begin{equation}\label{rdervj}
| f'_{V_1}(y)v_1^x - f'_{V_1}(x)v_1^x|  \leq \frac{2}{pn}\cdot |v_1^x| \leq \frac{2}{n} |v |.
\end{equation}
Using  \eqref{drdif}, we obtain the following upper estimate of $D$ (from \eqref{zlimsup}).
\begin{multline}\label{odhsh}
D:= |f(x+v) - f(x) - (f'_{V_1}(x)v_1^x + f'_{V_2^x}(x)v_2^x)|\\ = |(f(x+v) - f(x+v_2^x) - f'_{V_1}(x)v_1^x )
 + (f(x+v_2^x) -f(x) - f'_{V_2^x}(x)v_2^x)| \\
\leq |f(x+v) - f(x+v_2^x) - f'_{V_1}(x)v_1^x | + \frac{1}{n} |v |.
\end{multline}
Further, using  \eqref{dervy}, \eqref{fvposy}, \eqref{dify} and \eqref{rdervj}, we obtain
\begin{multline*}
|f(x+v) - f(x+v_2^x) - f'_{V_1}(x)v_1^x |\\ \leq |f(y+v_1^x) - f(y) - f'_{V_1}(y) v_1^x| +
 |f(x+v) - f ( y + v_1^x)| +  |f(y) -f(x + v_2^x)| + | f'_{V_1}(y)v_1^x - f'_{V_1}(x)v_1^x| \\
 \leq   \frac{1}{n} |v | + \frac{2}{n} |v | + \frac{1}{n} |v | +  \frac{2}{n} |v | = \frac{6}{n} |v |.
\end{multline*}
Thus \eqref{odhsh} gives $D\leq \frac{7}{n} |v |$ which contradicts \eqref{zlimsup}.
\end{proof}   
By induction, we easily infer from Lemma \ref{obec} the following analogon of Theorem \ref{sousp}.

\begin{theorem}\label{soulip}
	Let  $X_1,\dots,X_n$ and $Y$ be Banach spaces, $X:= X_1\times \cdots \times X_n$, 
	$G \subset X$ an open set
  and $f:G \to Y$ an  arbitrary mapping. Let the spaces
 $\cal L (X_1, Y),\dots, \cal L (X_{n-1}, Y)  $ be separable.    Then there exists a $\sigma$-porous
 set $A \subset G$ such that, for all $x \in G \setminus A$, the following  implication holds. 
\begin{multline*}
\text{$(*)$\ \ \ $f$ is Lipschitz at $x$ and has all Fr\' echet partial derivatives at $x$}\\
 \Rightarrow\ \ \text{$f$ is  Fr\' echet differentiable at 
$x$.} 
\end{multline*}
\end{theorem}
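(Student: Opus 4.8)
The plan is to deduce the theorem from Lemma \ref{obec} by applying it $n-1$ times, i.e.\ by induction on the number of variables. For $1\le k\le n$ I would regard $W_k:=X_1\times\cdots\times X_k$ as a closed subspace of $X$ in the usual way, so that $W_n=X$ and, inside $X$, one has the topological direct sum decomposition $W_{k+1}=W_k\oplus X_{k+1}$ for $1\le k\le n-1$ (the coordinate projections involved are bounded because $X$ carries the maximum norm). The key observation that makes the induction run is that $\mathcal{L}(W_k,Y)$ is separable for every $1\le k\le n-1$: a bounded linear map on $W_k=X_1\times\cdots\times X_k$ is determined by its $k$ restrictions, so $\mathcal{L}(W_k,Y)$ is, up to an equivalent norm, the finite product $\mathcal{L}(X_1,Y)\times\cdots\times\mathcal{L}(X_k,Y)$, and a finite product of separable spaces is separable.

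First I would, for each $1\le k\le n-1$, apply Lemma \ref{obec} with $V_1:=W_k$ (and with the given $X$, $Y$, $G$, $f$); this produces a $\sigma$-porous set $A_k\subset G$ such that assertion $(*)$ of that lemma holds at every $x\in G\setminus A_k$. Then I would set $A:=A_1\cup\cdots\cup A_{n-1}$, which is $\sigma$-porous as a finite union of $\sigma$-porous sets, and $A\subset G$. (For $n=1$ this union is empty, $A=\emptyset$, and the theorem reduces to the trivial observation that the Fr\'echet partial derivative with respect to the only variable is the Fr\'echet derivative.)

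Next I would verify the implication at an arbitrary $x\in G\setminus A$. Assume $f$ is Lipschitz at $x$ and has all Fr\'echet partial derivatives at $x$; by Definition \ref{part} this is precisely the statement that $f$ is Fr\'echet differentiable at $x$ along each $X_i$, $1\le i\le n$. In particular $f$ is Fr\'echet differentiable at $x$ along $W_1=X_1$. I then argue by induction on $k$: if $f$ is Fr\'echet differentiable at $x$ along $W_k$ for some $1\le k\le n-1$, then, because $x\notin A_k$, the hypotheses of assertion $(*)$ of Lemma \ref{obec} are met with $V_1=W_k$, $V_2^x:=X_{k+1}$ and $V^x:=W_{k+1}=W_k\oplus X_{k+1}$ (namely: $f$ is Lipschitz at $x$, Fr\'echet differentiable at $x$ along $W_k$, and Fr\'echet differentiable at $x$ along $X_{k+1}$), so that lemma yields that $f$ is Fr\'echet differentiable at $x$ along $W_{k+1}$. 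After $n-1$ such steps $f$ is Fr\'echet differentiable at $x$ along $W_n=X$, i.e.\ $f$ is Fr\'echet differentiable at $x$, as required.

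Once Lemma \ref{obec} is available this is essentially bookkeeping, so I do not expect a real obstacle. The only place the hypothesis of the theorem is used is the separability of $\mathcal{L}(W_k,Y)$ for $k\le n-1$; the $n$-th variable enters the inductive step only through the complement space $V_2^x=X_n$, for which Lemma \ref{obec} demands no separability — which is exactly why no assumption is placed on $\mathcal{L}(X_n,Y)$. The mildly delicate point is simply to keep the chain of identifications $W_{k+1}=W_k\oplus X_{k+1}$ inside $X$ consistent throughout the induction.
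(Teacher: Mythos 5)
Your argument is correct: it establishes the theorem by $n-1$ successive applications of Lemma \ref{obec}, exactly the engine the paper uses, but with a mirror-image decomposition. You build up from the left, taking $V_1:=W_k=X_1\times\cdots\times X_k$ as the ``known'' subspace and adjoining the single factor $X_{k+1}$ as the complement $V_2^x$; this forces you to verify that $\mathcal{L}(W_k,Y)$ is separable, which you do (correctly) via the isomorphism $\mathcal{L}(W_k,Y)\cong\mathcal{L}(X_1,Y)\times\cdots\times\mathcal{L}(X_k,Y)$. The paper instead runs an induction on $n$ that merges from the right: it applies Lemma \ref{obec} with $V_1:=X_{n-1}$ and $V_2^x:=X_n$ to absorb the last two factors into $\tilde X_{n-1}:=X_{n-1}\times X_n$, then invokes the inductive hypothesis on $X_1\times\cdots\times X_{n-2}\times\tilde X_{n-1}$. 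In that scheme the subspace playing the role of $V_1$ is always one of the original factors $X_j$ with $j\le n-1$, so only the separability of the individual spaces $\mathcal{L}(X_j,Y)$ is ever invoked and the product-space observation is not needed. The two routes are equally valid and of essentially the same length; yours trades the formal induction on $n$ for one extra (easy) separability lemma, and both correctly exploit that the $n$-th factor only ever appears as the complement $V_2^x$, for which Lemma \ref{obec} demands nothing.
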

\begin{proof}
We will proceed by induction on $n$. For $n=2$ the assertion immediately follows from Lemma \ref{obec}
	 used for $V_1:= X_1$ and $V_2^x:= X_2$.
	
	Now suppose that $n\geq3$ and ``the theorem holds for $n:=n-1$''. 
	Further suppose that $X_1,\dots,X_n$, $Y$, $G$ and $f$ which satisfy the assumptions of the theorem are  
		 given.  Since $\cal L (X_{n-1}, Y) $  is separable, we can use Lemma \ref{obec} with $V_1:= X_{n-1}$,
		 and obtain a  $\sigma$-porous set $A_1 \subset G$ such that, for each  $x \in G \setminus A_1$, the following assertion holds:
 \smallskip
	
	($\alpha_1$)\ \ \ If $f$ is Lipschitz at $x$ and 	 Fr\' echet differentiable
	 at $x$ along $X_{n-1}$ and along $X_n$, then $f$ is 	 Fr\' echet differentiable
	 at $x$ along the space $\tilde X_{n-1} : = X_{n-1} \times X_n$.
	\smallskip
	
	Now we identify $X$ with $\tilde X:= X_1 \times \cdots \times X_{n-2} \times \tilde X_{n-1}$ by the usual
	 way. By the inductive assumption, there exists  a  $\sigma$-porous set $A_2 \subset G$ such that, for each  $x \in G \setminus A_2$, the following assertion holds:
 \smallskip
	
	($\alpha_2$)\ \ \ If $f$ is Lipschitz at $x$ and 	 Fr\' echet differentiable at $x$ along all
	 spaces   $X_1,\dots, X_{n-2}, \tilde X_{n-1}$, then $f$ is  Fr\' echet differentiable at $x$
	(in $\tilde X = X$). 
	\smallskip
	
	Setting $A:= A_1 \cup A_2$, and using for each $x \in G\setminus A$ the validity of ($\alpha_1$)
	and ($\alpha_2$), we obtain   
	 that implication $(*)$ holds for each $x \in G\setminus A$.
	\end{proof}
		
		As an immedite consequence of Lemma \ref{obec}, we obtain the following analogon of Proposition \ref{dirsp}.
\begin{proposition}\label{dirlip} 
Let  $X$, $Y$ be Banach spaces, $G \subset X$  an open set
  and $f:G \to Y$ an arbitrary mapping. Let $V_1$ be a subspace of $X$ such that the space
	  $\cal  L (V_1,Y)$ is separable. Then there exists a $\sigma$-porous set $A \subset G$
	 such that, for each $x \in G \setminus A$, the following assertion holds:
	\smallskip
	
	$(*)$\ \ \  If $f$ is Lipschitz at $x$, is Fr\' echet differentiable  at $x$ along $V_1$ and along some topological complement
	 $V_2^x$ of $V_1$, then $f$ is Fr\' echet differentiable  at $x$.
\end{proposition}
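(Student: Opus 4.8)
The plan is to deduce the proposition directly from Lemma \ref{obec}, the point being simply to take the ``ambient'' subspace $V^x$ occurring there to be all of $X$.

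First I would apply Lemma \ref{obec} to the given subspace $V_1$; this is legitimate since by hypothesis $\cal L(V_1,Y)$ is separable. This produces a $\sigma$-porous set $A\subset G$ with the property that assertion $(*)$ of Lemma \ref{obec} holds at every point $x\in G\setminus A$. I claim this same $A$ works for the present proposition. Indeed, fix $x\in G\setminus A$ and assume the hypotheses of the proposition's $(*)$: $f$ is Lipschitz at $x$, Fr\'echet differentiable at $x$ along $V_1$, and Fr\'echet differentiable at $x$ along some topological complement $V_2^x$ of $V_1$. Being a topological complement of $V_1$ in $X$ means precisely that $X=V_1\oplus V_2^x$, so if we set $V^x:=X$, then the data $(V_1, V_2^x, V^x)$ satisfy exactly the premises appearing in assertion $(*)$ of Lemma \ref{obec} ($V^x=V_1\oplus V_2^x$, $f$ Lipschitz at $x$, $f$ Fr\'echet differentiable at $x$ along $V_1$ and along $V_2^x$). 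Applying that assertion gives that $f$ is Fr\'echet differentiable at $x$ along $V^x=X$.

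It then remains only to observe that Fr\'echet differentiability along $X$ is ordinary Fr\'echet differentiability: by definition $f$ is Fr\'echet differentiable at $x$ along $X$ iff the map $v\mapsto f(x+v)$ on $X\cap(G-x)=G-x$ is Fr\'echet differentiable at $0$, which is exactly the statement that $f$ is Fr\'echet differentiable at $x$. Hence $f$ is Fr\'echet differentiable at $x$, establishing $(*)$, and the proof is complete.

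I do not expect any genuine obstacle here: the two things one has to notice are that ``topological complement'' is precisely the condition $X=V_1\oplus V_2^x$ that allows the choice $V^x=X$ in Lemma \ref{obec}, and that the $\sigma$-porous exceptional set furnished by Lemma \ref{obec} is chosen uniformly in the auxiliary subspaces, so that the single set $A$ serves simultaneously for every $x\in G\setminus A$ and every admissible complement $V_2^x$.
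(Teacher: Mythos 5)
Your proof is correct and is exactly the argument the paper intends: Proposition \ref{dirlip} is stated there as an immediate consequence of Lemma \ref{obec}, obtained precisely by taking $V^x:=X$ and $V_2^x$ the given topological complement, so that differentiability along $V^x=X$ is ordinary Fr\'echet differentiability. No gaps.
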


In connection with Propositions \ref{dirsp} and \ref{dirlip}, it is natural to ask, for which Banach spaces
 $X$, $Y$ the following statement holds.
\smallskip

$(S)$  \ \ Let $f: X \to Y$ be continuous (resp. Lipschitz). Denote by $E_f$ the set of all points $x\in X$
 at which there exist a subspace $V^x$ of $X$ and its topological complement $W^x$ such that $f$ 
  is Fr\' echet differentiable  at $x$ both along $V^x$ and $W^x$ but is not Fr\' echet differentiable  at $x$.
	 Then $E_f$ is a first category set (resp. a $\sigma$-porous set).
	\smallskip
	
	If $ \dim X < \infty$ and $Y=\R$, then the ``continuous part'' of $(S)$ holds; it easily follows from
	\cite{I}. Further, if $ \dim X < \infty$, then the ``Lipschitz part'' of $(S)$
	 easily follows from  \cite[Theorem 2]{PZ}
	 (and also from Corollary \ref{fregat} below). 
	
	I conjecture that
	 no part of  $(S)$ holds if $X$ is an infinite-dimensional space, but I do not know any counterexample. 

 For the weaker version of  $(S)$ (which we obtain demanding that $V^x$ in the definition of $E_f$ is finite-dimensional), see Corollary   \ref{fregat} below. 
It is an immediate consequence of  Proposition \ref{kkodsx} below, which
  can be of an independent interest. In its proof we use substantially Lemma \ref{obec} and the following result which
	 immediately follows from \cite[Corollary 3.4]{Za14} (since each $\sigma$-directionally porous set is clearly $\sigma$-porous).
	\begin{proposition}[{\cite{Za14}}]\label{liga}\ \ \   
     Let $X$ be a separable Banach space, $Y$ a Banach space, $G \subset X$ an open set, and $f: G \to Y$ an arbitrary mapping. 
		Then there exists a
 $\sigma$-porous set $A \subset G$ such that, for each $x \in G \setminus A$,
		 the following assertion holds.
      \smallskip
			
  $(*)$\ \  \ If $f$ is Lipschitz at $x$ and the one-sided directional derivative $f'_{+}(x,u)$ exists in all directions $u$ from a set $S_x \subset X$
  whose linear span is dense in $X$, then $f$ is G\^ ateaux differentiable at $x$.
          \end{proposition}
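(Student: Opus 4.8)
\smallskip

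The conclusion follows immediately from \cite[Corollary 3.4]{Za14}, as already indicated before the statement: a $\sigma$-directionally porous set is $\sigma$-porous by Definition~\ref{dpor}. So the plan is to invoke that corollary, and I describe here how I would organise its proof.

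The first ingredient is a soft reduction. If $f$ is Lipschitz at $x$ and the two-sided directional derivative $v\mapsto f'(x,v)$ exists, is linear, and is bounded on some dense linear subspace $T$ of $X$, then $f$ is G\^ ateaux differentiable at $x$: the linear map $v\mapsto f'(x,v)$ extends to some $L\in\mathcal{L}(X,Y)$ by the Lipschitz estimate at $x$, and for an arbitrary $w\in X$, approximating $w$ by $v\in T$ and combining $\|(f(x+tw)-f(x))/t-(f(x+tv)-f(x))/t\|\le\Lip\,|w-v|$ with $(f(x+tv)-f(x))/t\to Lv$ gives $f'(x,w)=Lw$. Hence it is enough to produce a $\sigma$-porous set $A$ such that, for every $x\notin A$, the hypotheses of $(*)$ force the two-sided directional derivative of $f$ at $x$ to exist, be linear, and be bounded on a dense subspace.

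Next I would pass from one-sided to two-sided derivatives one direction at a time. For fixed $u\in X$ let $P(u)$ be the set of $x$ at which $f$ is Lipschitz, $f'_+(x,u)$ exists, but $f'(x,u)$ does not; for such $x$ the symmetric second difference $t\mapsto (f(x+tu)+f(x-tu)-2f(x))/t$ has strictly positive $\limsup$ as $t\to0+$, for otherwise $f'_+(x,-u)$ would exist and equal $-f'_+(x,u)$. Decomposing $P(u)=\bigcup_{k,l,p}P_{k,l,p}(u)$ according to a Lipschitz constant $l$ valid on $B(x,1/l)$, a rate $p$ with $\|(f(x+tu)-f(x))/t-f'_+(x,u)\|\le(100k)^{-1}$ for $0<t\le1/p$, and a level $k$ with $\|(f(x+tu)+f(x-tu)-2f(x))/t\|>1/k$ for arbitrarily small $t$, one checks that each $P_{k,l,p}(u)$ is porous, indeed in the direction $u$: for $x$ in it, choose such a small $t$ and set $t':=x-tu$; a point $y\in P_{k,l,p}(u)$ with $|y-t'|<t/(20lk)$ would, by writing the second difference of $f$ at $y$ in the directions $tu$ and $2tu$ (controlled by $(50k)^{-1}$) and comparing it, via the Lipschitz bound on $B(x,1/l)$, with the second difference of $f$ at $x$, force the latter to be $<1/k$, a contradiction. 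Thus each $P(u)$, and hence $\bigcup_j P(u_j)$ for any fixed countable $(u_j)$, is $\sigma$-directionally porous.

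The last step, which is the real content of \cite{Za14}, is substantially harder: to convert the hypothesis that $f'_+(x,\cdot)$ exists on \emph{some} $x$-dependent set $S_x$ with $\lin S_x$ dense into linearity of the two-sided directional derivative of $f$ at $x$ on a dense subspace, outside a $\sigma$-directionally porous set. The difficulty is precisely that $S_x$ varies with $x$, so the per-direction porosity obtained above cannot be exploited along a single fixed countable dense set of directions. The mechanism is a countable decomposition indexed by finite configurations of rational linear combinations of a fixed dense sequence in $X$: for each configuration one shows that the set of $x$ at which the associated directional derivatives all exist but are ``$\ep$-incompatible with one bounded linear map'' is porous in one of the directions of that configuration, and a density argument then transfers compatibility to arbitrary directions. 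I expect this uniformisation, together with the bookkeeping needed to extract a countable dense-span subset of $S_x$ and to verify additivity along it, to be the main obstacle. The scalar case $Y=\R$ already contains all of the difficulty, and a general Banach space $Y$ costs nothing, since the first two steps go through verbatim with the norm of $Y$. Assembling the three ingredients yields the desired $\sigma$-porous set $A$.
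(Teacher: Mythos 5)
Your proposal matches the paper exactly: Proposition \ref{liga} is stated there with no proof beyond the remark that it follows immediately from \cite[Corollary 3.4]{Za14} because every $\sigma$-directionally porous set is $\sigma$-porous, which is precisely your first paragraph. The additional sketch of how that cited corollary itself would be proved goes beyond what the paper supplies and is not needed for this statement.
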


\begin{proposition}\label{kkodsx}
Let $X$, $Y$ be separable Banach spaces, $G \subset X$ an open set and $f: G \to Y$ an arbitrary mapping. Then there exists a
 $\sigma$-porous set $A \subset G$ such that, for each $x \in G \setminus A$, the following assertion holds.
\smallskip

$(**)$\ \ \ Let $f$ be Lipschitz at $x$ and Fr\'echet differentiable at $x$ along a subspace $M^x$ of finite codimension, and
 let there exist a set $S_x\subset X$ such that $\lin S_x$ is dense in $X$ and $f'_+(x,u)$ exists for all $u \in S_x$.
 Then $f$ is Fr\' echet differentiable at $x$.
\end{proposition}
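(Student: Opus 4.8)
The plan is to combine the two tools already available: Proposition \ref{liga} (applied to the separable space $X$) and Lemma \ref{obec} (applied to a suitable finite-codimensional splitting). The intuition is that on the good set supplied by Proposition \ref{liga}, the existence of one-sided directional derivatives along a dense-span set forces full G\^ateaux differentiability; once we know $f$ is G\^ateaux differentiable at $x$ and Fr\'echet differentiable along a closed subspace $M^x$ of finite codimension, we want to promote this to full Fr\'echet differentiability. The obstacle is that $M^x$ varies with $x$, so Lemma \ref{obec} cannot be applied directly to a single fixed $V_1$; the finite-codimensional complement is finite-dimensional, but its position depends on the point. This is handled by separability of $X^{*}$ — more precisely, by covering the Grassmannian of finite-codimensional subspaces (equivalently, the relevant finite-dimensional complements) by countably many ``directions'', as is standard in this circle of ideas.

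Concretely, I would first invoke Proposition \ref{liga} to obtain a $\sigma$-porous set $A_0 \subset G$ outside which: if $f$ is Lipschitz at $x$ and $f'_+(x,u)$ exists for all $u$ in a set whose linear span is dense, then $f$ is G\^ateaux differentiable at $x$. Next, fix a countable dense set $\{e_1,e_2,\dots\} \subset X$ and, for each finite subset $F \subset \N$ with $\{e_i : i \in F\}$ linearly independent, let $V_F := \lin\{e_i : i\in F\}$, a fixed finite-dimensional (hence separable, with $\cal L(V_F,Y)$ separable since $Y$ is separable and $V_F$ is finite-dimensional — case (i) of Remark \ref{ljesep}) subspace of $X$. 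Apply Lemma \ref{obec} with $V_1 := V_F$ to get a $\sigma$-porous set $A_F$; since $F$ ranges over a countable family, $A := A_0 \cup \bigcup_F A_F$ is still $\sigma$-porous.

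Now I would verify $(**)$ for $x \in G \setminus A$. Assume $f$ is Lipschitz at $x$, Fr\'echet differentiable along $M^x$ with $\codim M^x = d < \infty$, and the directional hypothesis with $S_x$. First, Fr\'echet differentiability along $M^x$ implies (one-sided) directional derivatives exist along every direction in $M^x$; together with the directions in $S_x$, the linear span of $M^x \cup S_x$ is dense in $X$ (it contains the dense span of $S_x$), so since $x \notin A_0$, Proposition \ref{liga} gives that $f$ is G\^ateaux differentiable at $x$ — in particular $f$ is Fr\'echet differentiable at $x$ along every finite-dimensional subspace, because G\^ateaux differentiability plus Lipschitzness plus finite dimension forces Fr\'echet differentiability on that subspace (a compactness argument on the unit sphere). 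Now choose a finite-dimensional complement $W$ of $M^x$ with $\dim W = d$; by density of $\{e_i\}$ one can select indices $i_1,\dots,i_d$ so that $V_F := \lin\{e_{i_1},\dots,e_{i_d}\}$ is still a topological complement of $M^x$ (being a complement is an open condition on $d$-tuples, and the $e_i$ are dense). Since $x \notin A_F$, Lemma \ref{obec} — applied with $V_1 := V_F$, $V_2^x := M^x$, $V^x := V_F \oplus M^x = X$, using that $f$ is Fr\'echet differentiable at $x$ along $V_F$ (finite-dimensional, just established) and along $M^x$ (hypothesis) — yields that $f$ is Fr\'echet differentiable at $x$ along $V^x = X$, i.e. Fr\'echet differentiable at $x$.

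The main obstacle, and the step that needs care, is the selection of a fixed finite-dimensional complement $V_F$ from the countable family that is a topological complement of the given $M^x$: one must argue that ``being a complement'' is robust under small perturbation of a basis (equivalently, that the set of $d$-tuples spanning a complement of $M^x$ is open and nonempty in $X^d$) and then invoke density of $\{e_i\}$; a clean way is to start from any complement $W = \lin\{w_1,\dots,w_d\}$, note that the projection along $M^x$ onto $W$ is bounded, and perturb each $w_j$ slightly to an $e_{i_j}$ so that the resulting linear map stays invertible. The remaining ingredients — G\^ateaux plus Lipschitz plus finite dimension implies Fr\'echet along that subspace, and the bookkeeping that the union of countably many $\sigma$-porous sets is $\sigma$-porous — are routine.
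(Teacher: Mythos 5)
Your overall strategy — combine Proposition~\ref{liga} (to get G\^ateaux differentiability) with Lemma~\ref{obec} (to splice a finite\hyphen dimensional complement onto $M^x$), using a countable dense set to make the complement independent of $x$ — is the right circle of ideas, and your perturbation argument for selecting a complement of $M^x$ spanned by vectors from the countable dense set is sound. But there is a genuine gap at the pivotal step: you claim that G\^ateaux differentiability at $x$ plus ``Lipschitz at $x$'' forces Fr\'echet differentiability of $f$ at $x$ along the finite\hyphen dimensional subspace $V_F$, ``by a compactness argument on the unit sphere.'' In this paper ``Lipschitz at $x$'' is only the pointwise condition $\limsup_{y\to x}|f(y)-f(x)|/|y-x|<\infty$; the compactness argument needs the difference quotients $v\mapsto (f(x+tv)-f(x))/t$ to be \emph{equi}-Lipschitz on the unit sphere of $V_F$, which requires $f$ to be Lipschitz on a neighbourhood of $x$, not merely pointwise Lipschitz at $x$. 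The implication you use is in fact false already in $\R^2$: take $p_n=(1/n,1/n^2)$, put on each ball $B(p_n,1/n^3)$ a tent of height $|p_n|/2$ and let $f=0$ elsewhere. Then $|f(y)|\leq |y|$ near $0$ (so $f$ is Lipschitz at $0$ in the paper's sense), every ray from $0$ meets only finitely many of the balls (so $f$ is G\^ateaux differentiable at $0$ with derivative $0$), yet $f(p_n)/|p_n|=1/2$, so $f$ is not Fr\'echet differentiable at $0$. Since Proposition~\ref{kkodsx} assumes only pointwise Lipschitzness, your one\hyphen shot application of Lemma~\ref{obec} with $V_1:=V_F$ ($\dim V_F=d\geq 2$) is not justified.

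The paper sidesteps this by never adjoining more than one dimension at a time: it sets $k_x$ equal to the minimal codimension of a subspace along which $f$ is Fr\'echet differentiable at $x$, picks a single $v_x$ from the countable dense set outside a minimizing $M^x$, and applies Lemma~\ref{obec} with the \emph{one\hyphen dimensional} $V_1=\lin\{v_x\}$. For a one\hyphen dimensional $V_1$, Fr\'echet differentiability along $V_1$ is literally the existence of the two\hyphen sided directional derivative, which G\^ateaux differentiability (from Proposition~\ref{liga}) does supply with no compactness needed; the conclusion of Lemma~\ref{obec} then contradicts the minimality of $k_x$. Your argument can be repaired in the same spirit: instead of one application of Lemma~\ref{obec} with $V_1=V_F$, iterate it $d$ times with the one\hyphen dimensional spaces $\lin\{e_{i_1}\},\dots,\lin\{e_{i_d}\}$, enlarging $V_2^x$ from $M^x$ to $M^x\oplus\lin\{e_{i_1}\}$, etc.; the exceptional set is then the union of the countably many sets $A_{\{i\}}$, $i\in\N$, together with the set from Proposition~\ref{liga}, which is still $\sigma$-porous. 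As written, however, the proof has a false step.
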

\begin{proof}
 For the given $f$, we define $A$
as the set of all  $x \in G$ for which assertion $(**)$ does not hold; we will prove that $A$ is $\sigma$-porous.
 
Choose a dense countable set $D \subset X$ and, for each point $x \in A$, set
\begin{multline*}\text{ $k_x:= \inf\{ k \in \N \cup \{0\}: \ f$\  is  Fr\' echet differentiable at \ $x$}\\ \ \text{along a subspace $M^x$ of codimension $k\}$ },
\end{multline*}
(where we adopt the convention that $X$ has codimension $0$). Note that, if $x \in A$, then clearly
   $1 \leq k_x < \infty$.
Further, for each $x \in A$,  choose a subspace $M^x$ of $X$ of codimension $k_x$ such that $f$ is  Fr\' echet differentiable at $x$ along
  $M^x$ and then choose a vector $0 \neq v_x \in D \setminus M^x$. 
	 For each $ k \in \N $  and $v \in D$, set
	$$ A_{k,v}:= \{ x \in A: \ k_x=k,\ v_x = v\}.$$ 
	 Then  $A= \bigcup \{  A_{k,v}:\ k \in \N,\ v \in D\}$ and thus it is sufficient to prove that, for each fixed $k \in \N$
	 and $ v \in D$, the set $ A_{k,v}$ is $\sigma$-porous.
	
	To this end, set $V_1:= \lin \{v\}$ and observe that $\cal L (V_1,Y)$ is separable. Let $\tilde A \subset G$
	 be a $\sigma$-porous set which corresponds to $V_1$ and $f$ by Lemma \ref{obec}; i.e.  
	\begin{equation}\label{atilda}
	\text{assertion $(*)$ from Lemma \ref{obec} holds for each $x \in G \setminus \tilde A$.}
	\end{equation}
	Further, let
	$\tilde{\tilde A}$
	be a $\sigma$-porous set which corresponds to $f$ by Proposition \ref{liga}. 
	
	Now it is sufficient to prove that $ A_{k,v} \subset \tilde A  \cup \tilde{\tilde A}$. To prove this inclusion,
	 suppose to the contrary that there exists a point $x \in  A_{k,v} \setminus (\tilde A  \cup \tilde{\tilde A})$.
	Set $V_2^x:= M^x$ and $V^x:= V_1 + V_2^x$. We know that $V_2^x$ has codimension $1 \leq k_x < \infty$ and
	 $V_1 \cap V_2^x = \{0\}$. Consequently $V^x$ is closed, $V^x= V_1 \oplus V_2^x$ 
	(see, e.g., \cite[Exercise 5.27 and Proposition 5.3]{FHZ})
	 and it is easy to see that $V^x$ has codimension  $k_x-1$.
 Since $x \in A$, we have that the assumptions of assertion $(**)$ hold and so also the assumptions of
 assertion $(*)$ of Proposition \ref{liga} hold. 
So, since $x \in G \setminus \tilde{\tilde A}$, by the choice of  $\tilde{\tilde A}$ we obtain that
 $f$ is G\^ ateaux differentiable at $x$, and consequently $f$ is Fr\' echet differentiable
 along $V_1$ at $x$. Since $f$ is Fr\' echet differentiable at $x$ along the space 
 $V_2^x=  M^x$ and $x \in G \setminus \tilde A$, we can use 
  \eqref{atilda} and obtain that $f$ is   Fr\' echet differentiable
 along the space $V^x$ of codimension $k_x - 1$  which contradicts the definition of $k_x$.
\end{proof}

As  immediate consequences, we obtain the following results.

\begin{corollary}\label{fregat}
Let $X$, $Y$ be separable Banach spaces, $G \subset X$ an open set and $f: G \to Y$ an arbitrary mapping. Then there exists a
 $\sigma$-porous set $A \subset G$ such that the following assertion holds.
\smallskip

$(*)$\ \ \ Let $f$ be Lipschitz at $x$ and let there exist a finite-dimensional subspace  $V^x$ of $X$ and
 its topological complement $W^x$ of $X$
 such that $f$ is  Fr\' echet differentiable at $x$  along $V^x$
 and  $W^x$. 
 Then $f$ is Fr\' echet differentiable at $x$.
\end{corollary}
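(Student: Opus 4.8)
The plan is to deduce the statement directly from Proposition \ref{kkodsx}, taking for $A$ precisely the $\sigma$-porous set furnished by that proposition (applied to the given $X$, $Y$, $G$, $f$). With that choice of $A$, I would fix an arbitrary $x \in G \setminus A$ satisfying the hypotheses of $(*)$ --- so $f$ is Lipschitz at $x$, and there exist a finite-dimensional subspace $V^x$ of $X$ and a topological complement $W^x$ of $V^x$ such that $f$ is Fr\' echet differentiable at $x$ along $V^x$ and along $W^x$ --- and then check that the hypotheses of assertion $(**)$ of Proposition \ref{kkodsx} are met at $x$.

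To do so, first set $M^x := W^x$. Under the paper's conventions $W^x$ is a closed subspace, and since $X = V^x \oplus W^x$ with $V^x$ finite-dimensional, $W^x$ has finite codimension (indeed $\codim W^x = \dim V^x$); moreover $f$ is Fr\' echet differentiable at $x$ along $M^x$ by assumption. Next set $S_x := V^x \cup W^x$. Then $\lin S_x \supseteq V^x + W^x = X$, so $\lin S_x$ is dense in $X$. Finally, for $u \in V^x$ the Fr\' echet differentiability of $f$ at $x$ along $V^x$ yields in particular the existence of the directional derivative $f'(x,u)$, hence of the one-sided derivative $f'_+(x,u)$; the same reasoning applied to $W^x$ handles $u \in W^x$. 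Thus $f'_+(x,u)$ exists for every $u \in S_x$, and all the hypotheses of $(**)$ hold.

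Since $x \notin A$, assertion $(**)$ then gives that $f$ is Fr\' echet differentiable at $x$, which is exactly the conclusion of $(*)$; this completes the argument. I do not expect any genuine obstacle here: the only point deserving a word of care is that a topological complement $W^x$ of the finite-dimensional space $V^x$ is automatically a closed subspace of finite codimension, so that Proposition \ref{kkodsx} genuinely applies with $M^x := W^x$ and $S_x := V^x \cup W^x$.
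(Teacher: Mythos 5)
Your argument is correct and is exactly the derivation the paper intends: Corollary \ref{fregat} is stated there as an immediate consequence of Proposition \ref{kkodsx}, obtained precisely by taking $M^x:=W^x$ (closed, of codimension $\dim V^x<\infty$) and $S_x:=V^x\cup W^x$, whose linear span is all of $X$, with the one-sided directional derivatives supplied by the Fr\'echet differentiability along $V^x$ and $W^x$. Nothing further is needed.
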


\begin{corollary}\label{gat}
Let $X$, $Y$ be separable Banach spaces,  $G \subset X$ an open set and $f: G \to Y$  a Lipschitz mapping.  Then the set $A$
 of all points $x \in G$ at which  $f$ is  G\^ ateaux differentiable, is  Fr\' echet differentiable
 along a closed subspace of a finite codimension but is not Fr\' echet
 differentiable, is $\sigma$-porous.
\end{corollary}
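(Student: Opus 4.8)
The plan is to obtain the corollary as a direct consequence of Proposition \ref{kkodsx}; the whole content is the observation that a point of G\^ ateaux differentiability automatically satisfies the ``directional derivatives'' hypothesis of assertion $(**)$ there, with $S_x := X$.

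First I would invoke Proposition \ref{kkodsx} for the given $X$, $Y$, $G$, $f$ to obtain a $\sigma$-porous set $A \subset G$ such that assertion $(**)$ holds at every $x \in G \setminus A$. Let $E \subset G$ denote the set in the statement of the corollary, i.e. the set of all $x \in G$ at which $f$ is G\^ ateaux differentiable, is Fr\' echet differentiable along some closed subspace of finite codimension, but is not Fr\' echet differentiable. I claim $E \subset A$.

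To verify the claim I would fix an arbitrary $x \in E$ and check that the hypotheses of $(**)$ hold at $x$: since $f$ is Lipschitz on $G$, it is in particular Lipschitz at $x$; since $f$ is G\^ ateaux differentiable at $x$, the directional derivative $f'(x,u)$ exists for every $u \in X$, hence so does $f'_+(x,u)$, so the choice $S_x := X$ gives $\lin S_x = X$, which is dense in $X$; and by the definition of $E$ there is a closed subspace $M^x$ of $X$ of finite codimension along which $f$ is Fr\' echet differentiable at $x$. If $x$ belonged to $G \setminus A$, then $(**)$ would force $f$ to be Fr\' echet differentiable at $x$, contradicting $x \in E$. Hence $x \in A$, and therefore $E \subset A$.

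Finally, since $A$ is $\sigma$-porous and, directly from Definition \ref{dpor}, every subset of a porous set is porous (so every subset of a $\sigma$-porous set is $\sigma$-porous), the inclusion $E \subset A$ shows that $E$ is $\sigma$-porous, as claimed. There is essentially no obstacle here: the only point worth a word is the elementary observation that G\^ ateaux differentiability implies the existence of all one-sided directional derivatives, which is immediate from the definitions.
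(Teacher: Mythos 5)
Your argument is correct and is exactly the intended one: the paper states Corollary \ref{gat} as an immediate consequence of Proposition \ref{kkodsx}, and your verification (taking $S_x:=X$ from G\^ ateaux differentiability, noting global Lipschitzness gives Lipschitzness at each point, and concluding $E\subset A$ with $A$ the $\sigma$-porous set from the proposition) is precisely the routine check the paper leaves to the reader. Nothing is missing.
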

\begin{remark}\label{gatrem}
The set $A$ from Corollary \ref{gat} can be nonempty.
To show this, set $X:= \ell_2$, $Y:= \R$ and denote by $e_n,\ n\in \N,$ the canonical basis vectors in $\ell_2$.
 Set  
$$p_n:= \frac{1}{n} (e_1+ e_n),\ n\geq 2, \ F:= X \setminus \bigcup_{n=2}^{\infty}  B(p_n, \frac{1}{2n})
\ \ \text{and}\ \ f(x):= \dist (x,F),\ x \in X.$$
Then $f$ is Lipschitz on $G:=X$ and it is easy to check that $0 \in A$, where $A$ is as in Corollary \ref{gat}.
\end{remark}

\section{Results proved by the separable reduction method}\label{sepred}

In this section, we will use the well-known method of separable reduction. Namely, we will first prove ``the separable case''
 and from it we will obtain the ``nonseparable case'' using some
 known results  which say that some notions are ``separably determined in the sense of rich families''.
For the following notion of a ``rich family'' see e.g. \cite[p. 37]{LPT} or \cite{Cu2}.
\begin{definition}\label{rich}
Let $X$ be a Banach space. A family $\cal F$ of  separable subspaces of $X$ is called a {\it rich family}
 if:
 \begin{enumerate}
 \item[(R1)] If $V_i \in \cal F$ ($i \in \N$) and $V_1 \subset V_2\subset\dots$, then
  $\overline{\bigcup\{V_n: n \in \N\}} \in \cal F.$
 \item[(R2)] For each  separable subspace $V_0$ of $X$ there exists $V \in \cal F$ such that $V_0 \subset V$.
 \end{enumerate}
\end{definition}

A basic (easy) fact (see  e.g. \cite[Proposition 3.6.2]{LPT}) concerning rich families is the following.
\begin{lemma}\label{richpr}
Let $X$ be a Banach space and let $\{\cal F_n: n \in \N\}$ be rich families of  separable subspaces of $X$.
 Then $\cal F:= \bigcap\{\cal F_n: n \in \N\}$ is also a rich family of  separable subspaces of $X$.
\end{lemma}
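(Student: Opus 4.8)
The plan is simply to check the two defining conditions (R1) and (R2) of a rich family for $\cal F := \bigcap\{\cal F_n : n \in \N\}$, reducing everything to the corresponding properties of the individual families $\cal F_n$. Condition (R1) will be immediate, while condition (R2) requires a short diagonal (bookkeeping) argument to produce, from a single increasing chain of separable subspaces, a limit subspace that lies in \emph{every} $\cal F_n$ at once. I do not expect a genuine difficulty; the only point needing care is the diagonalization.

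\textbf{Condition (R1).} Suppose $V_i \in \cal F$ for $i \in \N$ with $V_1 \subset V_2 \subset \cdots$. Then for each fixed $n$ the sequence $(V_i)$ lies in $\cal F_n$, so (R1) for $\cal F_n$ gives $\overline{\bigcup_i V_i} \in \cal F_n$; as $n$ was arbitrary, $\overline{\bigcup_i V_i} \in \cal F$. Also every member of $\cal F$ is a separable subspace of $X$, since this already holds for the members of $\cal F_1$.

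\textbf{Condition (R2).} Fix a separable subspace $V_0$ of $X$; I want $V \in \cal F$ with $V_0 \subset V$. First I would fix a surjection $\pi : \N \to \N$ whose fibres $\pi^{-1}(\{n\})$ are all infinite. Then, by induction on $k \geq 1$, I would build an increasing chain of separable subspaces $V_0 \subset V_1 \subset V_2 \subset \cdots$ with $V_k \in \cal F_{\pi(k)}$: given the separable subspace $V_{k-1}$, property (R2) for the rich family $\cal F_{\pi(k)}$ provides a member $V_k \in \cal F_{\pi(k)}$ with $V_{k-1} \subset V_k$, and $V_k$ is again separable as a member of a rich family. Set $V := \overline{\bigcup_k V_k}$, which is a separable subspace of $X$, being the closure of a countable increasing union of separable subspaces, and clearly $V_0 \subset V$. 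Now fix $n \in \N$. Since $\pi^{-1}(\{n\})$ is infinite, it is cofinal in $\N$, so the subfamily $(V_k)_{k \in \pi^{-1}(\{n\})}$ is an increasing sequence of members of $\cal F_n$ with $\bigcup_{k \in \pi^{-1}(\{n\})} V_k = \bigcup_k V_k$; hence (R1) for $\cal F_n$ yields $V \in \cal F_n$. As $n$ was arbitrary, $V \in \cal F$, which establishes (R2) and completes the proof.

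\textbf{Main obstacle.} There is no real obstacle; the one thing to get right is the diagonalization, i.e.\ choosing $\pi$ with infinite fibres so that the single chain $(V_k)$ is cofinal in each $\cal F_n$ simultaneously, together with the elementary observation that a subsequence of an increasing chain has the same union, which is what lets us invoke (R1) — stated for $\N$-indexed increasing sequences — for each $\cal F_n$ in turn.
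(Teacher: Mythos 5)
Your proof is correct: the verification of (R1) is immediate and your diagonalization via a surjection $\pi:\N\to\N$ with infinite fibres, together with the observation that a cofinal subchain of an increasing chain has the same union, is exactly the standard argument for this fact. The paper itself gives no proof but cites \cite[Proposition 3.6.2]{LPT}, whose proof is essentially the same as yours, so there is nothing to add.
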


We will use also the following simple fact which is a reformulation of \cite[Lemma 4.4]{Za12}.
\begin{lemma}\label{souc}
Let $X_1,\dots,X_n$ be  Banach spaces and  $X: = X_1 \times \cdots\times X_n$. Let $\F_k$ be a rich family of  separable subspaces of $X_k$, $1\leq k \leq n$. Then
$$ \F:= \{V_1\times\cdots\times V_n:\ V_k \in \F_k,\ 1\leq k \leq n\} $$
is a rich family of separable subspaces of $X$.
\end{lemma}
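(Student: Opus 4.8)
The plan is to verify directly the two axioms (R1) and (R2) of Definition~\ref{rich} for the family $\F$, using that each $\F_k$ is rich together with two elementary features of the product space $X=X_1\times\cdots\times X_n$ equipped with the maximum norm: a finite product of separable (resp.\ closed, resp.\ nontrivial) linear subspaces is again separable (resp.\ closed, resp.\ nontrivial), so every member $V_1\times\cdots\times V_n$ of $\F$ is genuinely a separable subspace of $X$; and the coordinate projections $\pi_k\colon X\to X_k$ are continuous and linear.

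For (R2): given a separable subspace $V_0\subset X$, each $\pi_k(V_0)$ is separable, hence $\overline{\pi_k(V_0)}$ is a separable closed linear subspace of $X_k$; should it happen to be $\{0\}$, I first enlarge it to a nontrivial one by adjoining the span of a single nonzero vector, obtaining a separable subspace $Z_k\subseteq X_k$ with $\pi_k(V_0)\subseteq Z_k$. Applying (R2) for $\F_k$ to $Z_k$ gives $V_k\in\F_k$ with $\pi_k(V_0)\subseteq V_k$, and then $V:=V_1\times\cdots\times V_n\in\F$ satisfies $V_0\subseteq\pi_1(V_0)\times\cdots\times\pi_n(V_0)\subseteq V$, as required.

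For (R1): let $V^{(i)}=V_1^{(i)}\times\cdots\times V_n^{(i)}\in\F$ with $V^{(1)}\subseteq V^{(2)}\subseteq\cdots$. The first step is to push the monotonicity down to the factors: for fixed $k$ and $w\in V_k^{(i)}$, the vector of $X$ with $w$ in position $k$ and $0$ elsewhere lies in $V^{(i)}\subseteq V^{(i+1)}$, so $w\in V_k^{(i+1)}$; thus $V_k^{(1)}\subseteq V_k^{(2)}\subseteq\cdots$ for every $k$, and (R1) for $\F_k$ yields $W_k:=\overline{\bigcup_i V_k^{(i)}}\in\F_k$. It remains to show that $\overline{\bigcup_i V^{(i)}}=W_1\times\cdots\times W_n=:W\in\F$. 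One inclusion is clear, $W$ being closed and containing every $V^{(i)}$. For the reverse inclusion, given $w=(w_1,\dots,w_n)\in W$ and $\ve>0$, choose for each $k$ an index $i_k$ and a vector $u_k\in V_k^{(i_k)}$ with $|w_k-u_k|<\ve$; with $i:=\max_k i_k$ the monotonicity gives $u_k\in V_k^{(i)}$ for all $k$, so $u:=(u_1,\dots,u_n)\in V^{(i)}$ and $|w-u|<\ve$ since the norm on $X$ is the maximum norm. Hence $w\in\overline{\bigcup_i V^{(i)}}$, and (R1) follows.

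I do not anticipate a genuine obstacle here: the only points that need a moment's attention are the possibly-trivial projection in (R2) and the simultaneous coordinate-wise approximation in (R1), where passing to the maximum of finitely many indices is exactly what makes the estimate go through. Alternatively, one may simply quote \cite[Lemma~4.4]{Za12}, of which the present statement is a reformulation.
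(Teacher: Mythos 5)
Your proof is correct. Note that the paper itself gives no argument for Lemma~\ref{souc} at all: it is stated as ``a reformulation of \cite[Lemma 4.4]{Za12}'' and simply quoted, so your direct verification of (R1) and (R2) is a self-contained substitute rather than a variant of an argument in the text. The two points that actually require care are exactly the ones you isolate and handle correctly: in (R2), the projection $\pi_k(V_0)$ may fail to be closed or may be trivial, and since the paper's convention is that a ``subspace'' is a closed linear subspace different from $\{0\}$, enlarging $\overline{\pi_k(V_0)}$ by a one-dimensional span before invoking (R2) for $\F_k$ is needed; in (R1), the coordinatewise monotonicity $V_k^{(i)}\subseteq V_k^{(i+1)}$ (obtained by embedding $X_k$ into $X$ in the $k$-th slot) and the passage to $i=\max_k i_k$ are precisely what make the simultaneous approximation in the maximum norm work. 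Nothing is missing.
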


Much more difficult is the following result which says that Fr\' echet differentiability at a point is ``separably determined in the sense of rich families''.

\begin{theorem}[{\cite[Theorem 3.6.10]{LPT}}]\label{richfr}
Let $X$, $Y$ be Banach spaces, $G \subset X$ an open set and $f\colon G\to Y$ a mapping. Then there exists a rich family $\cal F$ of  separable subspaces of $X$ such that for every $V \in \cal F$, $f$ is Fr\' echet differentiable (with respect to $X$) at every $x \in V\cap G $, at which its restriction to $V\cap G$ is Fr\' echet differentiable (with respect to $V$).
\end{theorem}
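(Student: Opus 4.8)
The plan is to reduce Fr\'echet differentiability at a point to an \emph{internal} condition --- one phrased purely in terms of values of $f$, with no mention of a candidate derivative --- and then to produce the rich family $\cal F$ by closing separable subspaces under witnesses to the failure of that condition.

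I would start from the characterization behind \eqref{df} and Lemma \ref{zobchar}. By \cite[Theorem 1 and Note 2]{Za91}, for a mapping $g$ defined on an open subset of a Banach space $V$ into $Y$ and a point $x$ in its domain, $g'(x)$ exists if and only if $g$ is continuous at $x$ and $x\in\bigcap_{c>0}\bigcap_{\e>0}\bigcup_{\delta>0}D(g,c,\e,\delta)$, where $D(g,c,\e,\delta)$ is the set defined by the difference-quotient inequality of \cite[Definition 3]{Za91} (the unary analogue of \eqref{rme}), with the direction $v$ ranging over $S_V$ and the base point $z$ over $V$. Three facts matter: (a) this condition only involves values of $g$; (b) by monotonicity --- $D(g,c,\e,\delta)\subset D(g,c',\e',\delta)$ for $c\le c'$ and $\e\le\e'$, while shrinking $\delta$ enlarges the set --- it suffices to let $c,\e,\delta$ run over the positive rationals; (c) if $V$ is a subspace of $X$ and $x\in V\cap G$, then $D(f,c,\e,\delta)\cap V\subset D(f|_{V\cap G},c,\e,\delta)$, and continuity of $f$ at $x$ forces continuity of $f|_{V\cap G}$ at $x$. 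Fact (c) already gives the trivial half of the theorem (a restriction of a Fr\'echet differentiable map is Fr\'echet differentiable); the substance is the converse on a rich family.

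To build $\cal F$, use the axiom of choice to fix, for every $x\in G$, every triple of positive rationals $c,\e,\delta$ with $B(x,\delta)\subset G$, and every $m\in\N$: a witness $(v,z,h,k)$ to ``$x\notin D(f,c,\e,\delta)$'' (with the unary analogue of \eqref{rme} strictly violated) whenever one exists, and, if $f$ is discontinuous at $x$, a sequence $(y_i)$ in $G$ with $y_i\to x$ and $|f(y_i)-f(x)|>1/m$. Let $\phi(x)\subset X$ be the countable set of all points occurring in these witnesses, and let $\cal F$ consist of the separable subspaces $V$ with $\phi(x)\subset V$ for every $x\in V$. Property (R2) of Definition \ref{rich} follows by the standard $\omega$-step iteration: alternately adjoin $\bigcup_{x\in Q}\phi(x)$ for $Q$ a countable dense subset of the current subspace, then pass to the closed linear span. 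Granting richness, the theorem falls out by contraposition: if $V\in\cal F$, $x\in V\cap G$ and $f$ is not Fr\'echet differentiable at $x$, then by the characterization either $f$ is discontinuous at $x$ --- in which case the sequence $(y_i)\subset\phi(x)\subset V$ shows $f|_{V\cap G}$ is discontinuous at $x$ --- or there are rationals $c_0,\e_0$ with $x\notin D(f,c_0,\e_0,\delta)$ for all rational $\delta$, in which case the witnesses $(v,z,h,k)\subset\phi(x)\subset V$ show $x\notin D(f|_{V\cap G},c_0,\e_0,\delta)$ for all rational $\delta$; either way $f|_{V\cap G}$ fails to be Fr\'echet differentiable at $x$ with respect to $V$.

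The hard part is verifying (R1): that $\cal F$ is closed under closures of increasing unions. The obstruction is that a point $x$ of $\overline{\bigcup_n V_n}$ need not belong to any $V_n$, so its prescribed witnesses $\phi(x)$ may never have been adjoined, and --- the witness map being a bare choice function --- there is no continuity of $\phi$ in $x$ to fall back on. The remedy is to exploit the slack encoded in the parameters: having fixed a bad pair $c_0,\e_0$ at $x$, one relaxes it slightly and transfers each witness to a sufficiently nearby point $x'\in\bigcup_n V_n$, using that $D(f,c,\e,\delta)$ only grows when $c,\e$ are increased and $\delta$ decreased, together with control of the relevant difference quotients under a small perturbation of the base point, so that $x'$ already carries a witness inside the union and hence inside the closure. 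Making this transfer precise, with bookkeeping that simultaneously keeps the family closed under the monotone-union operation and keeps forcing the failure of the internal condition for the restriction, is the technical core of the argument; it is carried out in \cite[\S3.6]{LPT}.
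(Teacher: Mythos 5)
Note first that the paper does not actually prove this statement: it is quoted verbatim from \cite[Theorem 3.6.10]{LPT}, and the only argument the paper supplies is the parenthetical remark that the case of a general open set $G$ reduces to the case $G=X$ by applying the cited theorem to an arbitrary extension $\tilde f$ of $f$ to $X$ (Fr\'echet differentiability being a local property, $\tilde f|_V$ and $f|_{V\cap G}$ have the same differentiability points in the relatively open set $V\cap G$). Your proposal instead attempts to reprove the LPT theorem itself. Its architecture --- a derivative-free characterization of Fr\'echet differentiability points via \cite[Theorem 1 and Note 2]{Za91}, rational parameters by monotonicity, and a rich family obtained by closing separable subspaces under chosen witnesses to the failure of that characterization --- is the right general shape, and the contraposition step and (R2) are fine. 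But the proof does not close.

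The gap is exactly where you locate it, and it is not a removable technicality. With $\cal F$ defined as the family of separable subspaces $V$ satisfying $\phi(x)\subset V$ for every $x\in V\cap G$, property (R1) of Definition \ref{rich} fails for a bare choice function $\phi$, and the proposed repair is only gestured at. The obstruction is quantitative: if $x\in V=\overline{\bigcup_n V_n}$ lies in no $V_n$ and you pass to a nearby $x'\in\bigcup_n V_n$, the stored witness $(v,z,h,k)$ for $x'$ satisfies $\min(h,k)>c\,|z-x'|$, but $|z-x'|$ may be arbitrarily small compared with $|x-x'|$, so no fixed relaxation $c_1<c$ yields $\min(h,k)>c_1|z-x|$; the witness does not transfer to $x$ by monotonicity in $(c,\varepsilon,\delta)$ alone, and since $f$ is an arbitrary (not even continuous) mapping, there is no control of $f$ that would let you move the witness points either. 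Overcoming this is the actual content of \cite[Theorem 3.6.10]{LPT}, and your final sentence defers precisely that step to the reference, so what remains is an incomplete re-derivation of a result the paper deliberately treats as a black box. To match the paper, the only thing to verify is the easy reduction from general open $G$ to $G=X$ sketched above.
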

(In fact, {\cite[Theorem 3.6.10]{LPT}} is formulated for $G=X$ only, but if we apply this formally weaker
 theorem to any extension $\tilde f$ of $f$ to $X$, we obtain the assertion of Theorem \ref{richfr}.)
 
The following result on ``separable determination of first category sets and $\sigma$-upper porous sets''
 were first proved in  \cite{Cu1} and \cite{CR} ``in the sense of suitable models'' and then transferred to the following result
 in \cite{Cu2}.  

\begin{theorem}[{\cite[Corollary 5]{Cu2}}]\label{richcuth} 
Let $X$ be a Banach space and $A \subset X$  a Souslin set. Then there exists a rich family $\cal F$ of  separable subspaces of $X$ such that for every $V \in \cal F$ we have
\begin{align*}
(i)\ \ \ \text{$A$ is of the first category in $X$} 
&\ \ \Longleftrightarrow\ \ 
\text{$A\cap V$ is of the first category in $V$},\\
(ii)\ \ \ \ \ \ \ \ \ \ \ \ \ \ \ \text{$A$ is $\sigma$-porous in $X$} 
&\ \ \Longleftrightarrow\ \ 
\text{$A\cap V$ is $\sigma$-porous
 in $V$}. 
\end{align*} 
\end{theorem}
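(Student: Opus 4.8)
The plan is to prove this by the method of elementary submodels, the standard device for ``separable determination in the sense of rich families''. Fix a regular cardinal $\theta$ so large that $X$, its norm, and a fixed Souslin scheme $(F_s)_{s\in\N^{<\N}}$ of closed subsets of $X$ with $A=\bigcup_{\sigma\in\N^{\N}}\bigcap_{m\in\N}F_{\sigma\restriction m}$ all belong to $H(\theta)$, and fix a well-ordering of $H(\theta)$. Let $\mathcal F$ be the family of all subspaces $V_M:=\overline{X\cap M}$, where $M$ ranges over the countable elementary submodels of $(H(\theta),\in)$ having $X$, the norm and $(F_s)_s$ among their elements. A standard closing-off argument shows that $\mathcal F$ is a rich family: (R2) is the downward L\"owenheim--Skolem theorem, while (R1) follows from the Tarski--Vaught test for unions of elementary chains. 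For such an $M$ one has that $X\cap M$ is a countable \emph{dense} subset of $V_M$, and, since intersection distributes over the Souslin operation, $A\cap V_M$ is the Souslin subset of $V_M$ determined by the restricted scheme $(F_s\cap V_M)_s$; in particular it has the Baire property in $V_M$. The whole point of the construction is that every property of $A$ that is expressible with parameters from $M$ is correctly decided by $M$, so such a property, \emph{together with a witness}, can always be found ``inside'' $M$ and hence restricts nicely to $V_M$.

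I would prove (i) first. Since $A$ has the Baire property, ``$A$ is non-meager in $X$'' is equivalent to ``$A$ is comeager in some open ball of $X$'', and, by a density argument, to ``$A$ is comeager in a ball centered at a point of $X\cap M$ with rational radius''; likewise ``$A$ is meager in $X$'' is equivalent to ``$A$ is covered by a sequence of sets closed and nowhere dense in $X$''. Both reformulations have all parameters in $M$, hence are reflected by $M$. So, if $A$ is non-meager in $X$, $M$ yields a ball $B$ (centered in $X\cap M$, rational radius) and a sequence $(P_n)\in M$ of sets closed and nowhere dense in $X$ with $B\setminus A\subset\bigcup_nP_n$; as each $P_n$ lies in $M$ and is nowhere dense in $X$ --- a property testable through balls centered at points of the dense set $X\cap M$ --- each $P_n\cap V_M$ is nowhere dense in $V_M$, so $A\cap V_M$ is comeager in the nonempty ball $B\cap V_M$ and thus non-meager in $V_M$. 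Conversely, if $A\cap V_M$ is non-meager in $V_M$ it is comeager in some ball $B\cap V_M$ with $B$ as above; were $A$ not comeager in $B$, then $B\setminus A$ would be comeager in a subball $B'$ whose center can be reflected into $X\cap M$, and a sequence $(P_n)\in M$ of sets nowhere dense in $X$ with $A\cap B'\subset\bigcup_nP_n$ would, after restriction, make $A\cap V_M$ meager in $B'\cap V_M\subset B\cap V_M$ --- a contradiction; hence $A$ is non-meager in $X$. Taking contrapositives gives equivalence (i).

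Part (ii) is the analogue, and verifying it is the main obstacle. The difficulty is that $\sigma$-porosity, unlike meagerness, is \emph{not} obviously reducible to a reflectable statement: an arbitrary decomposition $A=\bigcup_nA_n$ into porous sets need not have pieces belonging to $M$, and porosity cannot be detected on balls centered in a prescribed countable set --- for instance a countable dense set is trivially $\sigma$-porous yet has no holes whatsoever, so the ``via $X\cap M$'' device of part (i) is unavailable as stated. The plan is to first normalize the decomposition --- writing each $A_n$ as $\bigcup_k A_n^{(k)}$, where $A_n^{(k)}$ is the set of points at which $A_n$ is porous with constant $\tfrac1k$, one may assume each $A_n$ is porous with a single fixed constant $c_n>0$ --- and then to replace it by a structured one: one invokes a structural fact (this is where the descriptive set theory of porous sets enters, playing the role the Baire property played above) ensuring that a $\sigma$-porous Souslin set admits a $\sigma$-porous cover whose pieces are \emph{definable} from the scheme $(F_s)_s$, or, equivalently, that (non-)$\sigma$-porosity of a Souslin set is decided by a determined infinite ``porosity game'' whose rules are coded by $(F_s)_s$. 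This yields a characterization of ``$A$ is $\sigma$-porous'' with all parameters in $M$, after which the reflection-and-restriction scheme of part (i) applies, with ``closed nowhere dense set'' replaced by ``$c$-porous definable piece'' and with the elementary but essential bookkeeping that $c$-porosity of such a piece in $X$ implies $c'$-porosity of its trace in $V_M$ (and conversely) for a controlled $c'>0$, because balls witnessing holes may be recentred at points of $X\cap M$ with only a fixed loss in the constant. Establishing this structured characterization and carrying out the constant bookkeeping is the technical heart of the proof; granted it, equivalence (ii) --- and hence the theorem --- follows exactly as in (i).
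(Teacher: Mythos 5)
First, note that the paper does not prove this statement at all: it is imported verbatim as \cite[Corollary 5]{Cu2}, resting on \cite{Cu1} (first category) and \cite{CR} ($\sigma$-porosity), so there is no internal proof to compare yours against; what can be judged is whether your sketch would reconstruct the cited result. Your overall strategy (countable elementary submodels $M\prec H(\theta)$ containing the Souslin scheme, $V_M:=\overline{X\cap M}$) is indeed the strategy of those papers, and your treatment of part (i) is essentially right: Souslin sets have the Baire property, meagerness and comeagerness-on-a-ball are statements whose witnesses can be reflected into $M$, and nowhere density of a closed set belonging to $M$ can be tested on balls centred in the dense set $X\cap M$ and hence passes to $V_M$. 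One point you gloss over is (R1): an increasing chain $V_1\subset V_2\subset\cdots$ of members of your family arises from models $M_1,M_2,\dots$ that need not form a chain, so the Tarski--Vaught argument does not apply directly; repairing this (e.g.\ via canonical Skolem hulls, so that a suitable model is recoverable from the set $X\cap M$) is exactly the bookkeeping \cite{Cu2} performs to convert ``suitable model'' statements into ``rich family'' statements, and it should not be waved away.

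The genuine gap is part (ii). You correctly diagnose why the reflection scheme of part (i) fails for $\sigma$-porosity, but the ``structural fact'' you then invoke --- that a $\sigma$-porous Souslin set admits a $\sigma$-porous cover by pieces definable from the scheme, or that $\sigma$-porosity is decided by a determined porosity game --- is neither established nor stated in the form in which the known proof actually proceeds. The argument of \cite{CR} runs through the contrapositive: by the Zaj\'{\i}\v{c}ek--Zelen\'y inscribing theorem, every non-$\sigma$-porous Souslin set contains a \emph{closed} non-$\sigma$-porous subset, and non-$\sigma$-porosity of a closed set is characterized combinatorially by Foran-type systems of families of closed sets; it is this characterization, not a definable decomposition of the $\sigma$-porous set itself, that gets reflected into $M$ and transferred to $V_M$, together with the constant bookkeeping you mention. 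Without that ingredient your part (ii) is an outline of what needs to be proved rather than a proof, and since the deep content of the cited theorem lives exactly there, the proposal cannot substitute for the citation.
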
  

Recall that every Borel set in $X$ is Souslin. We will use also the following known fact.

\begin{theorem}[{\cite[Theorem 4.7]{Za12}}]\label{red}
Let $X$, $Y$ be Banach spaces,  $G \subset X$ an open set, and let $f:G \to Y$ be an arbitrary mapping. Then the following conditions are equivalent.
\begin{enumerate}
\item
$f$ is generically Fr\' echet differentiable.
\item
There exists a rich family $\F$ of  separable subspaces of $X$ such that $f|_{V\cap G}$ is generically 
 Fr\' echet differentiable (with respect to $V$) on $V \cap G$ for each $V \in \F$.
\end{enumerate}
\end{theorem}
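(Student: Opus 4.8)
The plan is to deduce both implications directly from the separable-determination results already at our disposal, combining rich families via Lemma \ref{richpr}. Throughout write $D(f)$ for the set of points of $G$ at which $f$ is Fr\'echet differentiable with respect to $X$, and set $A := G \setminus D(f)$; by \eqref{df} the set $D(f)$ is $F_{\sigma\delta}$, and since $G$ is open, $A$ is Borel, hence Souslin, so Theorem \ref{richcuth} is applicable to $A$. For a separable subspace $V$ of $X$, let $N_V \subseteq V \cap G$ denote the set of points at which $f|_{V\cap G}$ fails to be Fr\'echet differentiable with respect to $V$. The one elementary fact I will use, besides the quoted theorems, is that restricting a Fr\'echet differentiable map to a subspace through the point keeps it Fr\'echet differentiable (with derivative the restriction of $f'(x)$); contrapositively, $N_V \subseteq A \cap V$ for every separable subspace $V$. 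I will also use freely that, since $V \cap G$ is relatively open in $V$ and $G$ is open in $X$, ``first category in $V \cap G$'' coincides with ``first category in $V$'', and ``first category in $G$'' with ``first category in $X$''.

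For (1)$\Rightarrow$(2): assume $A$ is of the first category in $X$. Let $\F$ be the rich family of separable subspaces furnished by Theorem \ref{richcuth}(i) for the Souslin set $A$. For each $V \in \F$ the set $A \cap V$ is then of the first category in $V$, hence so is $N_V \subseteq A \cap V$; that is, $f|_{V\cap G}$ is generically Fr\'echet differentiable on $V \cap G$. So $\F$ witnesses (2), and Theorem \ref{richfr} is not even needed for this direction.

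For (2)$\Rightarrow$(1): let $\F$ be a rich family as in (2). By Theorem \ref{richfr} choose a rich family $\F'$ such that, for every $V \in \F'$, $f$ is Fr\'echet differentiable with respect to $X$ at every point of $V \cap G$ at which $f|_{V\cap G}$ is Fr\'echet differentiable with respect to $V$; equivalently $A \cap V \subseteq N_V$ for $V \in \F'$, which together with the reverse inclusion above gives $A \cap V = N_V$ for such $V$. By Theorem \ref{richcuth}(i) applied to the Souslin set $A$, choose a rich family $\F''$ such that, for every $V \in \F''$, $A$ is of the first category in $X$ if and only if $A \cap V$ is of the first category in $V$. By Lemma \ref{richpr}, $\F \cap \F' \cap \F''$ is a rich family, hence nonempty by (R2); pick any $V$ in it. Since $V \in \F$, $N_V$ is of the first category in $V$; since $V \in \F'$, $A \cap V = N_V$ is therefore of the first category in $V$; and since $V \in \F''$, it follows that $A$ is of the first category in $X$, i.e.\ $f$ is generically Fr\'echet differentiable.

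There is no genuinely deep step here: the argument is bookkeeping with rich families, and the only points needing a little care are (a) verifying that $A = G \setminus D(f)$ is Souslin, which is exactly where \eqref{df} and the openness of $G$ enter, and (b) keeping straight the asymmetry between the inclusion $N_V \subseteq A \cap V$, valid for all $V$, and $A \cap V \subseteq N_V$, which requires $V$ to lie in the rich family supplied by Theorem \ref{richfr} --- it is precisely on a common rich subfamily of the three families $\F$, $\F'$, $\F''$ that the equivalence closes up.
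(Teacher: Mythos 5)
Your argument is correct. Note, however, that the paper does not prove Theorem~\ref{red} at all --- it is quoted verbatim from \cite[Theorem 4.7]{Za12} --- so there is no in-paper proof to compare against; what you have produced is a self-contained derivation from the paper's other imported ingredients (\eqref{df}, Theorem~\ref{richfr}, Theorem~\ref{richcuth}, Lemma~\ref{richpr}), following exactly the separable-reduction pattern the author uses in the proof of Theorem~\ref{sousepred}. The two points you flag are indeed the ones that matter: (a) the Souslin-ness of $A=G\setminus D(f)$, which is the only place \eqref{df} is needed and which legitimizes the appeal to Theorem~\ref{richcuth}; and (b) the asymmetry between the trivial inclusion $N_V\subseteq A\cap V$ (valid for every $V$, since Fr\'echet differentiability with respect to $X$ restricts to $V$) and the reverse inclusion, which holds only on the rich family supplied by Theorem~\ref{richfr}. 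You are also right that Theorem~\ref{richcuth} cannot be dispensed with even in the direction $(1)\Rightarrow(2)$: a set of first category in $X$ can meet a separable subspace $V$ in a set residual in $V$, so one genuinely needs the specially chosen rich family there. One historical remark: since Theorem~\ref{richcuth} appeared (in rich-family form) only in \cite{Cu2}, well after \cite{Za12}, the original proof of this statement necessarily proceeded differently (via a direct construction or elementary submodels); your route is the shorter, ``modern'' one made available by the machinery the present paper already assembles.
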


Now we will prove, using Theorem \ref{sousp}, Theorem \ref{soulip} and the method of separable reduction,
 the following result which generalizes Theorem \ref{sousp} and partly generalizes Theorem \ref{soulip}.

	\begin{theorem}\label{sousepred}
	Let  $X_1,\dots,X_{n}$ be Banach spaces, $X:= X_1 \times \cdots \times X_n$ and  $G \subset X$
	 an open set.  Suppose that each space $\Li (\tilde X_i, \tilde Y)$
	  is separable  whenever $\tilde X_i$ is a separable subspace of $X_i$, $i=1,\dots, n-1$, and $\tilde Y$ is a separable subspace of $Y$.
		Let $f:G \to Y$ be a continuous (resp. Lipschitz) mapping.
		
		Then there exists a first category (resp. $\sigma$-porous) set $A \subset G$ such that, for each $x \in G \setminus A$, the following implication holds.
	$$\text{$f$ has all Fr\' echet partial derivatives at $x$\ \ $\Rightarrow$\ \ $f$ is  Fr\' echet differentiable at 
$x$.} $$
\end{theorem}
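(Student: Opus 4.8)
The plan is to reduce the general (possibly non-separable) situation to the separable case already established in Theorem \ref{sousp} (continuous case) and Theorem \ref{soulip} (Lipschitz case), using the machinery of rich families collected in this section. First I would set
$$
A := \{x \in G:\ f \text{ has all Fr\'echet partial derivatives at } x \text{ but } f'(x) \text{ does not exist}\},
$$
and the goal is to show $A$ is of the first category (resp. $\sigma$-porous) in $X$. The first step is to check that $A$ is a Souslin (in fact Borel) set, so that Theorem \ref{richcuth} applies: by \eqref{df} the set $D(f)$ of Fr\'echet differentiability points is $F_{\sigma\delta}$, and by Proposition \ref{borpart} (applied with the splitting $X = Y_i \times X_i$, $Y_i := \prod_{j\neq i} X_j$, noting $f$ is continuous hence continuous in the remaining variable) each set $D_i(f) := \{x : f'_i(x) \text{ exists}\}$ is Borel; thus $A = \bigl(\bigcap_{i=1}^n D_i(f)\bigr) \setminus D(f)$ is Borel, hence Souslin.

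Next I would assemble a single rich family $\cal F$ of separable subspaces of $X$ that is simultaneously ``good'' for all the notions involved. Concretely: Theorem \ref{richfr} gives a rich family $\cal F_0$ of separable subspaces of $X$ along which Fr\'echet differentiability of $f$ is detected; Theorem \ref{richcuth} (applied to the Souslin set $A$) gives a rich family $\cal F_1$; and applying Theorem \ref{richfr} to each partial function (equivalently, a rich-family result for partial differentiability) together with Lemma \ref{souc} produces a rich family of product subspaces $V = V_1 \times \cdots \times V_n$ with $V_i \subset X_i$ separable, along which the partial derivatives $f'_i$ are detected. By Lemma \ref{richpr} the intersection $\cal F$ of these countably many rich families is again a rich family; moreover, intersecting once more with the product rich family from Lemma \ref{souc}, I may assume every $V \in \cal F$ has the form $V = V_1 \times \cdots \times V_n$ with each $V_i$ a separable subspace of $X_i$. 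The hypothesis on $X_1,\dots,X_{n-1}$ is exactly what guarantees that for such $V$ the spaces $\Li(V_i, Y)$ — or rather $\Li(V_i, \tilde Y)$ for $\tilde Y$ a separable subspace of $Y$ containing $f(V\cap G)$ — are separable (here one further shrinks $\cal F$ via a rich family of separable subspaces of $Y$ containing the relevant pieces of the range, which is standard); thus the separable hypotheses of Theorem \ref{sousp} (resp. Theorem \ref{soulip}) are met for $f|_{V \cap G}$.

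Now fix any $V = V_1 \times \cdots \times V_n \in \cal F$. Applying Theorem \ref{sousp} (resp. Theorem \ref{soulip}) to $f|_{V\cap G}$, there is a first category (resp. $\sigma$-porous) set $A_V \subset V \cap G$ outside of which existence of all partial Fr\'echet derivatives (with respect to $V$) forces Fr\'echet differentiability (with respect to $V$). The point is that for $x \in A \cap V$ we have, by the choice of $\cal F_0$-part (Theorem \ref{richfr}) and the partial-differentiability-part of $\cal F$, that $f|_{V\cap G}$ has all partial derivatives with respect to $V$ at $x$ but is not Fr\'echet differentiable with respect to $V$ at $x$ (if it were, Theorem \ref{richfr} would give Fr\'echet differentiability of $f$ with respect to $X$, contradicting $x \in A$); hence $x \in A_V$. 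Therefore $A \cap V$ is of the first category (resp. $\sigma$-porous) in $V$. Since this holds for every $V$ in the rich family $\cal F$, and $\cal F$ was also chosen inside $\cal F_1$, the direction $(\Longrightarrow)$ of Theorem \ref{richcuth}(i) (resp. (ii)) — in fact we only need the implication ``$A\cap V$ small in $V$ for one (equivalently every) $V \in \cal F$ $\Rightarrow$ $A$ small in $X$'' — yields that $A$ is of the first category (resp. $\sigma$-porous) in $X$, completing the proof.

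The main obstacle is the bookkeeping of Step 2: one must produce a \emph{single} rich family of product subspaces $V_1 \times \cdots \times V_n$ (with an accompanying separable subspace of $Y$) that is simultaneously adapted to Fr\'echet differentiability of $f$, to each of the $n$ partial differentiabilities, to the Souslin set $A$, and to the separability of all the operator spaces $\Li(V_i,\tilde Y)$ — and to verify that ``$f|_{V\cap G}$ has all partial derivatives w.r.t. $V$ at $x$'' is genuinely equivalent (for $V\in\cal F$) to ``$f$ has all partial derivatives w.r.t. $X$ at $x$'', which requires applying the rich-family theorem for (ordinary) Fr\'echet differentiability to the partial functions in the right product coordinates and invoking Lemma \ref{souc} and Lemma \ref{richpr} to intersect everything. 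Once the rich family is correctly built, the transfer is essentially automatic from Theorem \ref{richcuth} and the separable results.
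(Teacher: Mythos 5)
Your proposal follows essentially the same route as the paper: prove the separable case first (Theorem \ref{sousp}, resp.\ Theorem \ref{soulip}, applied with the range replaced by the separable space $\overline{\lin f(G)}$), check that $A$ is Borel via \eqref{df} and Proposition \ref{borpart}, and then transfer by intersecting the rich families from Theorem \ref{richcuth}, Lemma \ref{souc} and Theorem \ref{richfr} using Lemma \ref{richpr}. The one place where you diverge is the step you single out as ``the main obstacle'': building a rich family adapted to each of the $n$ partial differentiabilities so that partial differentiability of $f|_{V\cap G}$ with respect to $V$ is \emph{equivalent} to partial differentiability of $f$ with respect to $X$. This is not needed, and in the form you sketch it (applying Theorem \ref{richfr} ``to each partial function'') it would not even be implementable, since the partial functions form an uncountable family indexed by the frozen coordinates and one cannot intersect uncountably many rich families. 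The point is that only the trivial direction is used: if $x\in A\cap V$ then $f'_{X_i}(x)$ exists, and its restriction to $V_i$ is automatically the Fr\'echet derivative of $g:=f|_{V\cap G}$ along $V_i$; the separably-determined (hard) direction is required only for full Fr\'echet differentiability, where Theorem \ref{richfr} is invoked in contrapositive form to conclude that $g$ is \emph{not} Fr\'echet differentiable at points of $A\cap V$. Likewise, the auxiliary rich family of separable subspaces of $Y$ is unnecessary: once $V$ is separable and $f$ continuous, $\overline{\lin f(V\cap G)}$ is already a separable subspace of $Y$ to which the hypothesis on $\Li(\tilde X_i,\tilde Y)$ applies. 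With these simplifications your argument is exactly the paper's proof.
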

	\begin{proof}
	We will prove the ``continuous part'' and the ``Lipschitz part'' of the theorem together.

In the first step of the proof, we will prove the theorem in the special case when all spaces
 $X_1, \dots,  X_n$ are separable. In this case observe that the space $\tilde Y:= \overline { \lin f(G)} $
 is separable and therefore the spaces  $\Li (X_1, \tilde Y),\dots, \Li (X_{n-1}, \tilde Y)$
 are separable by the assumptions of the theorem. Further observe  that $f: G \to Y$ is partially Fr\' echet differentiable (resp. Fr\' echet differentiable) at $x\in G$ if and only if $f:G \to \tilde Y$ has this property.
 So, if  $f$ is continuous (resp. Lipschitz), then the existence of a first category (resp. $\sigma$-porous)
 set $A$ from the conclusion of the theorem follows from Theorem \ref{sousp} (resp. Theorem \ref{soulip}).

In the second step,  we will prove the general case using the method of separable reduction.
Denote by $A$ the set of all $x\in G$ at which all partial Fr\' echet derivatives  $f'_i(x),\ i=1,\dots,n,$ exist
 but $f$ is not Fr\' echet  differentiable.
Our aim is to prove that 
\begin{equation}\label{spoj}
\text{$A$ is a first category set if $f$ is continuous}
\end{equation}
and 
\begin{equation}\label{lips}
\text{$A$ is a $\sigma$-porous set if $f$ is Lipschitz.}
\end{equation}
Notice that \eqref{df} and Proposition \ref{borpart} give that $A$ is a Borel set (and hence a Souslin set) 
in $X$. Thus Theorem  \ref{richcuth} 
	implies that  there exists a rich family $\cal F_1$ 
of  separable subspaces of $X$ such that for every $V \in \cal F_1$ we have that
\begin{equation}\label{srcat}
	\text{$A$ is of the first category in $X$ whenever  $A\cap V$ is of the first category in $V$}
	\end{equation}
	 and
\begin{equation}\label{srpor}
	\text{$A$ is of $\sigma$-porous in $X$ whenever  $A\cap V$ is $\sigma$-porous in $V$.}
	\end{equation}

By Lemma \ref{souc}, the family
	$$ \F_2:=  \{V_1\times\cdots\times V_n:\ V_k \ \text{is a  separable subspace of}\ X_k,\ 1\leq k \leq n\} $$
is a rich family of  separable subspaces of $X$. 

By Theorem \ref{richfr}   there exists a rich family $\cal F_3$ of separable subspaces of $X$ such that for every $V \in \cal F_3$, $f$ is Fr\' echet differentiable (with respect to $X$) at every $x \in V \cap G$ at which its restriction to  $V\cap G$  is Fr\' echet differentiable (with respect to $V$).
	
Now, by Lemma \ref{richpr}, $\F:= \F_1 \cap \F_2 \cap \mathcal{F}_3$ is a rich family of separable subspaces 
of $X$.
	
	Choose an arbitrary $V \in \F$; since $V \in \F_2$, we have 
	$V= V_1\times\cdots\times V_n$ where $V_k$ is a  separable subspace of $X_k$,\ $1\leq k \leq n$. 
	
	If $A \cap V\neq \emptyset$, set
	$g:=  f|_{(V\cap G)}$.  Using the definition of $A $, we easily see that $g_i'(x)= g'_{V_i}(x)$, $i=1,\dots, n$,
	 exist for each $x \in A \cap V$. 
	Since  $f$ is  Fr\' echet nondifferentiable at each $x \in A \cap V$ and $V \in \F_3$, we conclude
	 that $g$ is   Fr\' echet nondifferentiable at each $x \in A \cap V$. Further,
	by the assumptions of the theorem,
	the space $\Li(\tilde X_i, \tilde Y)$ is separable whenever  $\tilde X_i$ is a separable subspace
	 of $V_i$, $i=1,\dots, n-1$, and $\tilde Y$ is a separable subspace of $Y$.
	So, using for $g$ on $G\cap V \subset V = V_1\times \dots \times V_n$ the special case of the theorem proved in the first step of the proof, we 
		 obtain that  $A \cap V$ is a first category (resp. $\sigma$-porous) set in $V$ if $f$ is
		 continuous (resp. Lipschitz). Therefore, since $V \in \F_1$, we obtain that both \eqref{srcat}
		 and \eqref{srpor} hold.
		\end{proof}
		
		Using \eqref{asp}, we see that  Theorem   \ref{sousepred} has the following interesting consequence.

\begin{corollary}\label{aspl}
	Let  $X_1,\dots,X_{n}$ be Banach spaces such that  $X_1,\dots,X_{n-1}$ are Asplund spaces, and let 
	$G \subset X:= X_1 \times \cdots \times X_n$ be  
	 an open set. 
		Let $f$ be a continuous (resp. Lipschitz) real function on $G$.
		
		Then there exists a first category (resp. $\sigma$-porous) set $A \subset G$ such that, for each $x \in G \setminus A$, the following implication holds.
	$$\text{$f$ has all Fr\' echet partial derivatives at $x$\ \ $\Rightarrow$\ \ $f$ is  Fr\' echet differentiable at 
$x$.} $$
\end{corollary}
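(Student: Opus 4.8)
The plan is to deduce Corollary \ref{aspl} directly from Theorem \ref{sousepred}, so that the only real work is to verify the separability hypothesis of that theorem in the present situation. First I would take $Y := \R$ and regard the real function $f$ as a mapping from $G$ into $Y$. Since $Y$ is one-dimensional, the only nontrivial (closed) subspace $\tilde Y$ of $Y$ is $Y = \R$ itself; in particular the quantifier over separable subspaces $\tilde Y$ of $Y$ appearing in Theorem \ref{sousepred} reduces to the single case $\tilde Y = \R$, for which $\Li(\tilde X_i, \tilde Y)$ is just the dual space $\tilde X_i^{*}$.

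Next I would invoke the characterization \eqref{asp} of Asplund spaces. For each $i \in \{1,\dots,n-1\}$ the space $X_i$ is Asplund, hence $\tilde X_i^{*}$ is separable for every separable subspace $\tilde X_i \subset X_i$. Combining this with the previous observation, $\Li(\tilde X_i, \tilde Y)$ is separable whenever $\tilde X_i$ is a separable subspace of $X_i$ ($i = 1,\dots,n-1$) and $\tilde Y$ is a separable subspace of $Y = \R$. This is precisely the hypothesis of Theorem \ref{sousepred}.

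Applying Theorem \ref{sousepred} with this choice of $Y$ then produces a first category set (resp. a $\sigma$-porous set) $A \subset G$ outside of which the existence of all Fr\'echet partial derivatives of $f$ forces Fr\'echet differentiability of $f$ at the point in question, which is exactly the assertion of Corollary \ref{aspl}. The argument is essentially immediate once Theorem \ref{sousepred} is available; the only two points worth spelling out are the reduction to the case $\tilde Y = \R$ (which makes the condition on $\tilde Y$ vacuous beyond that single instance) and the appeal to \eqref{asp} to get separability of the duals of separable subspaces of each $X_i$. I do not anticipate any genuine obstacle here.
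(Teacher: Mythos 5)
Your proof is correct and is exactly the argument the paper intends (the paper simply says ``Using \eqref{asp}, we see that Theorem \ref{sousepred} has the following consequence''): take $Y=\R$, note $\Li(\tilde X_i,\R)=\tilde X_i^{*}$, and apply \eqref{asp} to get separability of these duals for separable $\tilde X_i\subset X_i$. Nothing further is needed.
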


\begin{remark}\label{soured}
\begin{enumerate}
\item[(i)]  There are also other concrete consequences of Theorem  \ref{sousepred}. For example,
 using  facts from Remark \ref{ljesep}, it is easy to show that Theorem  \ref{sousepred} can be used if each 
 $X_i$, $i=1,\dots, n-1$, is a Hilbert space (resp. a subspace of $c_0(\Gamma)$) and
  $Y= \ell_q(\Gamma)$, $1\leq q<2$, (resp.  $Y$ has the Radon-Nikod\' ym property).
	\item[ (ii)]  Theorem \ref{sousepred} can be further slightly generalized to a more complicated
	 general theorem (working with ``rich families of $\tilde X_i$'') which has further concrete applications.
	 \item[(iii)]  I do not know any example excluding the possibility that the assumptions concerning
	Banach spaces  $X_1$,\dots,$X_n$, $Y$ can be omitted in
	Theorem  \ref{sousepred}. From this reason the observations in (i) and
	 (ii) are mentioned without any details.
	\end{enumerate}
\end{remark}

By the separable reduction method, we prove also the following result on continuous functions whose all partial functions
 are DC (recall that a function on a Banach space is called DC if it is the difference of two continuous
 convex functions).

\begin{proposition}\label{pardc}
Let $X_1,\dots,X_n$ be Asplund spaces and $f$ a continuous real function on $X:=X_1\times\dots\times X_n$.
 Let each partial function
$$  f(x_1,\dots,x_{i-1},\cdot,x_{i+1},\dots,x_n),\ \ \ 1\leq i \leq n,$$
is DC on $X_i$. Then $f$ is generically Fr\' echet differentiable.
\end{proposition}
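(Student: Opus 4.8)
The plan is to combine Proposition \ref{gdpf} with the separable reduction machinery set up in Theorems \ref{richfr}, \ref{richcuth} (its first-category part), \ref{red}, together with Lemma \ref{souc}, Lemma \ref{richpr}, and the fact that a DC function on an Asplund space is generically Fr\'echet differentiable. First I would recall why each partial function being DC already gives, on each separable subspace, the needed generic Fr\'echet differentiability: if $V_i \subset X_i$ is a separable subspace of an Asplund space, then $V_i$ is itself a separable Asplund space, hence $V_i^*$ is separable; and the restriction of a DC function to $V_i$ is DC on $V_i$, so it is generically Fr\'echet differentiable on $V_i$ (a DC function is locally the difference of two continuous convex functions, each of which is generically Fr\'echet differentiable on an Asplund space, and the union of two first category sets is first category).

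Next, the separable reduction step. Since $f$ is continuous, Theorem \ref{red} tells us that $f$ is generically Fr\'echet differentiable iff there is a rich family $\F$ of separable subspaces of $X$ such that $f|_{V\cap G}$ is generically Fr\'echet differentiable on $V$ for each $V\in\F$ (here $G=X$). So it suffices to produce such a family. By Lemma \ref{souc}, the family
$$\F_1:=\{V_1\times\dots\times V_n:\ V_k\ \text{a separable subspace of}\ X_k\}$$
is rich. By Theorem \ref{richfr} there is a rich family $\F_2$ such that for every $V\in\F_2$, $f$ is Fr\'echet differentiable with respect to $X$ at every point of $V$ where $f|_V$ is Fr\'echet differentiable with respect to $V$. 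By Lemma \ref{richpr}, $\F:=\F_1\cap\F_2$ is rich. Fix $V=V_1\times\dots\times V_n\in\F$ and put $g:=f|_V$. Then $g$ is a continuous real function on the product $V_1\times\dots\times V_n$ of \emph{separable} spaces, and each partial function of $g$ is the restriction to a separable subspace of a DC function, hence DC, hence generically Fr\'echet differentiable on the separable Asplund space $V_i$ (so in particular $\cal L(V_i,\R)=V_i^*$ is separable, which is exactly what Proposition \ref{gdpf} requires). Applying Proposition \ref{gdpf} to $g$ on $V$ yields that $g$ is generically Fr\'echet differentiable on $V$ (with respect to $V$). Since $V\in\F_2$, at each such point $f$ is Fr\'echet differentiable with respect to $X$; thus $f|_{V\cap X}$ is generically Fr\'echet differentiable on $V$ for every $V\in\F$, and Theorem \ref{red} gives the conclusion.

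I expect the one point needing a little care — the main (though modest) obstacle — to be verifying that the "separable case" of Proposition \ref{gdpf} genuinely applies to $g$: one must check that the hypotheses of Proposition \ref{gdpf} (all $V_i$ separable, all $\cal L(V_i,\R)$ separable, and each partial function generically Fr\'echet differentiable on $V_i$) are met, and the non-obvious ingredients are that a DC function restricts to a DC function on a closed subspace and that a DC function on a \emph{separable} Asplund space is generically Fr\'echet differentiable. Both are standard (the latter because the set of non-Fr\'echet-differentiability points of a continuous convex function on an Asplund space is first category, and the difference of two such sets is first category), so once they are stated the argument closes. Everything else is a routine assembly of the rich-family lemmas already quoted.
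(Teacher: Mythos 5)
Your argument is correct and is essentially the paper's own proof: restrict to members $V=V_1\times\dots\times V_n$ of the rich family from Lemma \ref{souc}, note that each $V_i$ is a separable Asplund space so the partial functions of $g=f|_V$ are DC hence generically Fr\'echet differentiable and $V_i^*$ is separable, apply Proposition \ref{gdpf} to get generic Fr\'echet differentiability of $g$ on $V$, and conclude by Theorem \ref{red}. The only deviation is that your intersection with the family $\F_2$ from Theorem \ref{richfr} is superfluous, since condition (ii) of Theorem \ref{red} only asks for generic differentiability of $f|_{V\cap G}$ \emph{with respect to $V$}; the passage to differentiability with respect to $X$ is already contained in that theorem.
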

\begin{proof}
By Lemma \ref{souc}, the family
	$$ \F:=  \{V_1\times\cdots\times V_n:\ V_k \ \text{is a  separable subspace of}\ X_k,\ 1\leq k \leq n\} $$
is a rich family of  separable subspaces of $X$. Choose an arbitrary $V=V_1\times\cdots\times V_n \in \F$
 and set $g:= f|_V$. Then each partial function
$$  g(v_1,\dots,v_{i-1},\cdot,v_{i+1},\dots,v_n),\ \ \ 1\leq i \leq n,$$
 is DC on the Asplund space (see \eqref{asp}) $V_i$ and consequently is generically differentiable on $V_i$.
Since the spaces $V_1^*,\dots, V_{n-1}^*$  are separable, we can use Proposition \ref{gdpf} and obtain that
 $g$ is generically differentiable. Therefore $f$ is generically differentiable by Theorem \ref{red}.
 \end{proof}
\begin{remark}
\begin{enumerate}
\item
In the case of a   locally Lipschitz $f$,  Proposition \ref{pardc} immediately follows from
 \cite[Corollary 8.1]{Za12} (and also from \cite[Corollary 8.4]{Za12}).
\item
The proof of Proposition \ref{pardc} works if we weaken the assumption that each partial function of
 $f$ is DC to the assumption that it is a difference of two approximately convex functions. Thus
 we obtain that, in  \cite[Corollary 8.1]{Za12}, it is possible to suppose the continuity of $f$ 
 instead of the local Lipschitzness of $f$.
\item
Proposition \ref{pardc} could be proved quite similarly as Theorem \ref{sousepred}, but the present proof based on Theorem \ref{red} is shorter.
\end{enumerate}
\end{remark}
\bigskip

{\bf Acknowledgements.}\ \ \ I thank Ond\v rej Kalenda who suggested the generalization of Corollary 
 \ref{aspl}  to Theorem \ref{sousepred}.

\end{document}